\numberwithin{equation}{section} 
\newtheorem{lem}{Lemma}[section]
\newtheorem{thm}[lem]{Theorem}
\newtheorem{rmk}[lem]{Remark}
\newtheorem{cor}[lem]{Corollary}
\title{Sobolev Inequalities In Manifolds With Asymptotically Nonnegative Curvature\thanks{Supported by NSFC Grants No. 11771087, No. 12171091 and No. 11831005.}}
\date{}
\begin{document}
	\author{Yuxin Dong, Hezi Lin, Lingen Lu}
	\maketitle

	\begin{abstract}
		ABSTRACT. Using the ABP-method as in a recent work by Brendle \cite{Br2}, we establish some sharp Sobolev and isoperimetric inequalities for compact domains and submanifolds in a complete Riemannian manifold with asymptotically nonnegative curvature.  These inequalities generalize those given by Brendle in the case of complete Riemannian manifolds with nonnegative curvature.
	\end{abstract}

\section{Introduction}
It is known that Sobolev inequalities, as an important analytic tool in geometric analysis, have close connections with isoperimetric inequalities. The classical isoperimetric inequality for a bounded domain $D$ in $\mathbb{R}^n$ says that
$$
n^n|B^n||D|^{n-1}\leq|\partial D|^n  
$$
where $B^n$ denotes the unit ball in $\mathbb{R}^n$, and the equality holds if and only if $D$ is a ball. There have been numerous works generalizing this inequality to different settings (cf. \cite{Os}, \cite{Cha}, \cite{Choe}). 

The isoperimetric inequalities on minimal surfaces or minimal submanifolds have a long history. For example, \cite{Car}, \cite{Cha}, \cite{Hs}, \cite{Re}, \cite{LSY}, \cite{St}, \cite{To} investigated the isoperimetric inequality on minimal surfaces under various conditions, while the famous Michael-Simon Sobolev inequality for general dimensions (\cite{Al}, \cite{MS}) implies an isoperimetric inequality for minimal submanifolds, but with a non-sharp constant.
It is conjectured that any $n$-dimensional minimal submanifold $\Omega$ of $\mathbb{R}^N$ satisfies the classical isoperimetric inequality: $n^n|B^n||\Omega|^{n-1}\leq|\partial\Omega|^n$ with equality holds if and only if $\Omega$ is a ball in an $n$-plane of $\mathbb{R}^N$.
Recently, S. Brendle \cite{Br1}, inspired by the ABP method as in \cite{Ca} and \cite{Tr}, established a Michael-Simon-Sobolev type inequality on submanifolds of arbitrary dimension and codimension, which is sharp if the codimension is at most 2. 
In particular, his result implies a sharp isoperimetric inequality for minimal submanifolds in Euclidean space of codimension at most 2.
Later, Brendle \cite{Br2} also generalized his results in \cite{Br1} to the case that the ambient space is a Riemannian manifold with nonnegative curvature.
In \cite{Jo}, F. Johne gave a sharp Sobolev inequality for manifolds with nonnegative Bakry-\'Emery Ricci curvature, which generalizes Brendle's results in \cite{Br2}. In \cite{BK}, Balogh and Kris\'aly proved a sharp isoperimetric inequality in metric measure spaces satisfying $\mathsf{CD}(0,N)$ condition which implies the sharp isoperimetric inequalities in \cite{Br2} and \cite{Jo}. Moreover, they also obtained a sharp $L^p$-Sobolev inequality for $p\in(1,n)$ on manifolds with nonnegative Ricci curvature and Euclidean volume growth. In a recent preprint \cite{APEP}, the authors also investigated sharp and rigid isoperimetric comparison theorems in $\mathsf{RCD}(K,N)$ metric measure spaces.

In this paper, we generalize Brendle's results in \cite{Br2} to the case that the ambient space has asymptotically nonnegative curvature. The notion of asymptotically nonnegative curvature was first introduced by U. Abresch \cite{Ab1}. Some important geometric, topological and analysis problems have been investigated for this kind of manifolds (cf. \cite{Ab2}, \cite{AG}, \cite{Ka1,Ka2}, \cite{LT1,LT2}, \cite{Zhu}, \cite{GPZ}, \cite{Ba1}, \cite{Zha}, etc). Now we recall its definition as follows. Let $\lambda:[0,+\infty)\to[0,+\infty)$ be a nonnegative and nonincreasing continuous function satisfying
\begin{equation}\label{b0}
b_0:=\int_0^{+\infty}s\lambda(s)ds<+\infty,
\end{equation}
which implies
\begin{equation}\label{b1}
	b_1:=\int_0^{+\infty}\lambda(s)ds<+\infty.
\end{equation}
A complete noncompact Riemannian manifold $(M,g)$ of dimension $n$ is said to have asymptotically nonnegative Ricci curvature (resp. sectional curvature) if there is a base point $o\in M$ such that
\begin{equation}\label{asy-cur}
\mathrm{Ric}_q(\cdot,\cdot)\geq-(n-1)\lambda(d(o,q))g
\quad (resp.\ \mathrm{Sec}_q\geq-\lambda(d(o,q))),
\end{equation}
where $d(o,q)$ is the distance function of $M$ relative to $o$. Clearly, this notion includes the manifolds whose Ricci (resp. sectional) curvature is either nonnegative outside a compact set or asymptotically flat at infinity. In particular, if $\lambda\equiv0$ in \eqref{asy-cur}, then this becomes the case treated in \cite{Br2}.

Let $h(t)$ be the unique solution of 
\begin{equation}\label{h}
\left\{
\begin{aligned}
	&h''(t)=\lambda(t)h(t),\\
	&h(0)=0, h'(0)=1.
\end{aligned}
\right.
\end{equation}
By ODE theory, the solution $h(t)$ of \eqref{h} exists for all $t\in[0,+\infty)$. According to \cite{Zhu} (see also Theorem 2.14 in \cite{PRS}), the function
$$
\frac{|\{q\in M:d(o,q)<r\}|}{n|B^n|\int_0^rh^{n-1}(t)dt}
$$
is a non-increasing function on $[0,+\infty)$ and thus we may introduce the asymptotic volume ratio of $M$ by
\begin{equation}\label{avr}
	\theta:=\lim_{r\to+\infty}\frac{|\{q\in M:d(o,q)<r\}|}{n|B^n|\int_0^rh^{n-1}(t)dt},
\end{equation}
with $\theta\leq1$. In particular, we have $|\{q\in M:d(o,q)<r\}|\leq|B^n|e^{(n-1)b_0}r^n$.

First, by combining the method in \cite{Br2} with some comparison theorems, we establish a Sobolev type inequality for a compact domain in a Riemannian manifold with asymptotically nonnegative Ricci curvature as follows.

\begin{thm}\label{thm1.1}
	Let $M$ be a complete noncompact $n$-dimensional manifold of asymptotically nonnegative Ricci curvature with respect to a base point $o\in M$. Let $\Omega$ be a compact domain in $M$ with boundary $\partial\Omega$, and let $f$ be a positive smooth function on $\Omega$. Then
	$$
	\int_{\partial\Omega} f+\int_\Omega |D f|+
	2(n-1)b_1\int_\Omega f
	\geq n|B^n|^{\frac {1}{n}}\theta^{\frac1n}
	\Big(\frac{1+b_0}{e^{2r_0b_1+b_0}}\Big)^\frac{n-1}{n}
	\Big(\int_\Omega f^\frac{n}{n-1}\Big)^\frac{n-1}{n},
	$$
	where $r_0=\max\{{d}(o,x)|x\in\Omega\}$, $\theta$ is the asymptotic volume ratio of $M$ given by \eqref{avr} and $b_0,b_1$ are defined in \eqref{b0} and \eqref{b1}.
\end{thm}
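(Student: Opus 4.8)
The plan is to adapt Brendle's ABP (Alexandrov--Bakelman--Pucci) argument from \cite{Br2} to the asymptotically nonnegative curvature setting, where the model comparison function is $h(t)$ from \eqref{h} rather than the linear function $t$. First I would set up the Neumann-type boundary value problem: let $u$ solve
\begin{equation*}
\mathrm{div}(f\, Du) = \Big(\int_{\partial\Omega} f + \int_\Omega |Df| + 2(n-1)b_1\int_\Omega f\Big)\cdot\frac{1}{\int_\Omega f^{n/(n-1)}}\, f^{1/(n-1)} \quad\text{in }\Omega,
\end{equation*}
with $\langle Du,\nu\rangle = 1$ on $\partial\Omega$; here I would normalize so the PDE is solvable (Fredholm/compatibility condition), and standard elliptic theory gives $u\in C^{2,\alpha}$ on the open set where things are regular, with the usual caveat that $u$ need only be $C^{1,\alpha}$ globally. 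Then, following Brendle, define the ``contact set'' $U \subseteq \Omega$ of points $x$ where $u$ is touched from below (after the relevant parabolic-type rescaling by $h$) and a transport map $\Phi_r : U \to M$ built from the normal exponential map along $\nabla u$, namely $\Phi_r(x) = \exp_x(\ldots)$ incorporating the radial weight. The geometric heart is to show that $\Phi_r(U)$ covers a large metric ball (or rather the $h$-model region) around points of $\Omega$, which uses that on the contact set $u$ lies below a suitable paraboloid and hence the geodesic from $x$ in direction $\nabla u(x)$ realizes distance.

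The key estimate is the pointwise Jacobian bound for $\Phi_r$ on the contact set. Here is where the curvature hypothesis enters: along a geodesic emanating from a point of $\Omega$, the Jacobi fields are controlled by comparison with the ODE \eqref{h}, since $\mathrm{Ric}\ge -(n-1)\lambda(d(o,\cdot))$ and one can bound $\lambda(d(o,\cdot))$ in terms of $\lambda$ evaluated along the geodesic using $r_0 = \max_{\Omega} d(o,\cdot)$ and the triangle inequality $d(o,q)\le r_0 + (\text{distance traveled})$. This produces, via the Riccati/Bochner comparison, a bound on the Laplacian of the distance-type function and hence on $\det D\Phi_r$ of the shape
\begin{equation*}
\det D\Phi_r(x) \le \Big(\text{const}\Big)\cdot\Big(\frac{h(\rho)}{\rho}\Big)^{?}\cdot\Big(\frac{1}{n}\big(\mathrm{div}(f Du)/f + \ldots\big)\Big)^n,
\end{equation*}
and, crucially, the factors $e^{2r_0 b_1 + b_0}$ and $(1+b_0)$ in the theorem should emerge from bounding $h(t)$ and $h'(t)$: indeed from \eqref{h} one gets $h'(t) = 1 + \int_0^t \lambda h \le 1 + b_0\sup(h/t)\cdot(\ldots)$, and more cleanly $h(t)/t$, $h'(t)$ lie between explicit constants involving $b_0, b_1$ (e.g. $1\le h'(t)\le e^{b_1}$ type bounds, and $h(t)\le t\, e^{b_1}$), which after the optimization collapse to the stated constant $\big((1+b_0)/e^{2r_0 b_1 + b_0}\big)^{(n-1)/n}$. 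The term $2(n-1)b_1\int_\Omega f$ on the left-hand side is exactly the ``error'' absorbed from the curvature term when integrating the Laplacian comparison — it plays the role that is absent in \cite{Br2}.

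Next I would integrate the Jacobian inequality over the contact set $U$ and use the area formula: $|\Phi_r(U)| \le \int_U \det D\Phi_r$. The left side, by the covering property, is bounded below by the volume of the model region, which after letting $r\to\infty$ and invoking the monotonicity/definition of the asymptotic volume ratio $\theta$ in \eqref{avr} contributes the factor $n|B^n|\theta$; the right side, by the AM--GM inequality applied to the arithmetic-mean form of the Laplacian (exactly as in Brendle), is bounded by $\big(\tfrac1n(\cdots)\big)^n$ times $\int_\Omega f^{n/(n-1)}$ after using the PDE. Rearranging and taking $n$-th roots yields the claimed inequality.

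The main obstacle I anticipate is twofold: (i) making the comparison geometry uniform in $r$ so that the $r\to\infty$ limit legitimately produces $\theta$ — this requires care because the ``radial direction'' in the model is governed by $h$, which is only asymptotically linear, so one must track constants like $h(t)/t$ explicitly and show they are controlled on all of $[0,\infty)$ by quantities depending only on $b_0,b_1,r_0$; and (ii) the regularity of $u$ — since $u$ is generally only $C^{1,\alpha}$, the contact set argument and the application of the area formula need the standard Brendle-type workaround (approximation, or working on the set of twice-differentiability points via Alexandrov's theorem) to justify the pointwise Jacobian computation. Everything else — the solvability of the PDE, the AM--GM step, and the bookkeeping of the curvature error term — should follow the template of \cite{Br1,Br2} with only cosmetic changes.
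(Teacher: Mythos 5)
Your high-level skeleton is correct and matches the paper: solve a Neumann problem, work on the contact set $A_r$, bound the Jacobian of the transport map $\Phi_r$ via a Riccati comparison, integrate, and send $r\to\infty$ to extract the asymptotic volume ratio. But several of the pivotal technical steps are stated in a way that would not work, and you misattribute the source of the key constants.

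\textbf{The triangle inequality is in the wrong direction.} You propose to control $\lambda(d(o,\bar\gamma(t)))$ via $d(o,q)\le r_0 + t$. Since $\lambda$ is nonincreasing, an \emph{upper} bound on $d(o,\bar\gamma(t))$ gives a \emph{lower} bound on $\lambda(d(o,\bar\gamma(t)))$, hence an upper bound on $\mathrm{Ric}$, which is useless for a Laplacian/Riccati comparison. What the paper actually uses is the reverse triangle inequality $d(o,\bar{\gamma}(t))\geq\big|d(o,\bar{x})-t|Du(\bar{x})|\big|$, which (because $\lambda$ is nonincreasing) yields the \emph{upper} bound $\lambda(d(o,\bar\gamma(t)))\le\lambda(|d(o,\bar{x})-t|Du(\bar{x})||)$ needed in \eqref{tr1}. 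The point-dependence of the right-hand side here is essential.

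\textbf{The Jacobi comparison is not against $h$ from \eqref{h}.} You write that "the Jacobi fields are controlled by comparison with the ODE \eqref{h}." In the actual proof, the Riccati quantity $g=\frac1n\mathrm{tr}\,Q$ is compared against solutions $\psi_1,\psi_2$ of the ODE $\psi''=\Lambda_{\bar x}\psi$, where $\Lambda_{\bar x}(t)=\frac{n-1}{n}|Du(\bar{x})|^2\lambda(|d(o,\bar{x})-t|Du(\bar{x})||)$ is a \emph{point-dependent} potential, not $\lambda$ itself. The model function $h$ from \eqref{h} enters only through the definition \eqref{avr} of $\theta$ and the asymptotic volume computation \eqref{avr-f-d}; it does not appear in the Jacobian bound. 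Correspondingly, the factor $e^{(n-1)(2r_0b_1+b_0)}$ does not "emerge from bounding $h(t)$ and $h'(t)$" — it emerges from the explicit estimate $\int_0^\infty w\,\lambda(|d(o,\bar{x})-w|)\,dw\le 2r_0b_1+b_0$ applied to $\int_0^\infty\tau\Lambda_{\bar x}(\tau)\,d\tau$, which controls $\psi_1(t)/t$ via Lemma 2.13 in \cite{PRS}. (Your proposed bounds "$1\le h'(t)\le e^{b_1}$, $h(t)\le t e^{b_1}$" also have the wrong constant; the relevant facts are $t\le h(t)\le e^{b_0}t$ and $h'(\infty)=1+b_0$, and the latter is exactly what produces $(1+b_0)^{n-1}$.)

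\textbf{Regularity and the PDE.} The solution $u$ is genuinely $C^{2,\gamma}$ for every $\gamma<1$ (Theorem 6.31 in \cite{GT} applies to the Neumann problem with Lipschitz right-hand side); no Alexandrov/approximation workaround is required, and invoking one obscures why the argument is clean here. Also, the Neumann problem you write down has the divisor prefactor multiplying $f^{1/(n-1)}$ alone, which does not satisfy the Fredholm compatibility condition $\int_\Omega\mathrm{div}(fDu)=\int_{\partial\Omega}f$. The working formulation (after normalizing as in \eqref{scale1}) must carry the subtraction terms on the right-hand side, $\mathrm{div}(fDu)=nf^{n/(n-1)}-2(n-1)b_1 f-|Df|$, which is precisely what makes Lemma \ref{hessu} and the compatibility condition come out. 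Finally, the mention of a "parabolic-type rescaling by $h$" in the contact-set definition has no counterpart in the proof: the set $A_r$ is defined exactly as in \cite{Br2}, using the distance squared, unmodified by $h$.
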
	

The following result characterizes the case of equality in Theorem \ref{thm1.1}:

\begin{thm}\label{thm1.2}
	Let $M$ be a complete noncompact $n$-dimensional manifold of asymptotically nonnegative Ricci curvature with respect to a base point $o\in M$. Let $\Omega$ be a compact domain in $M$ with boundary $\partial\Omega$, and let $f$ be a positive smooth function on $\Omega$.  If
	$$
	\int_{\partial\Omega} f+\int_\Omega |D f|+
	2(n-1)b_1\int_\Omega f
	=
	n|B^n|^{\frac {1}{n}}\theta^{\frac1n}
	\Big(\frac{1+b_0}{e^{2r_0b_1+b_0}}\Big)^\frac{n-1}{n}
	\Big(\int_\Omega f^\frac{n}{n-1}\Big)^\frac{n-1}{n},
	$$
	where $r_0=\max\{{d}(o,x)|x\in\Omega\}$, $\theta$ is the asymptotic volume ratio of $M$ given by \eqref{avr} and $b_0,b_1$ are defined in \eqref{b0} and \eqref{b1}.
	Then $b_0=b_1=0$, $M$ is isometric to Euclidean space, $\Omega$ is a ball, and $f$ is constant.
\end{thm}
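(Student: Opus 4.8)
The plan is to trace through the proof of Theorem \ref{thm1.1} and record, at each inequality that was used, the condition under which equality can hold; then assemble these conditions into the stated rigidity. Recall that the argument behind Theorem \ref{thm1.1} proceeds by the ABP method: one normalizes $f$ so that the relevant constant equals $1$, solves a Neumann problem $\mathrm{div}(f\,Du)=\big(\int_{\partial\Omega}f+\int_\Omega|Df|+2(n-1)b_1\int_\Omega f\big)/\int_\Omega f^{n/(n-1)}\cdot f^{1/(n-1)} - \text{(something)}$ on $\Omega$ so that the ``transport map'' $\Phi_t(x)=\exp_x(t\,Du(x))$ carries the contact set $U$ onto (most of) a large coordinate ball, then compares the Jacobian of $\Phi_t$ against a model Jacobian built from the warping function $h$ from \eqref{h}. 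The key pointwise inequalities used are: (i) the arithmetic–geometric mean inequality applied to the eigenvalues of $D^2u$ together with $|Du|\le$ something on the contact set; (ii) the Riccati/Heintze--Karcher-type comparison bounding the Jacobian factor coming from the ambient geometry in terms of $h$ and the exponential factors $e^{(n-1)b_0}$, $e^{2r_0 b_1}$; (iii) the monotonicity that defines $\theta$, used to pass from the volume of large balls to $n|B^n|\int_0^r h^{n-1}$ and ultimately to the factor $\theta$; and (iv) the estimate $h(t)\le (1+b_0)t$ and $h'(t)\le e^{b_0}$ or similar bounds that produce the constant $\big((1+b_0)/e^{2r_0b_1+b_0}\big)^{(n-1)/n}$.

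First I would set up the equality hypothesis and observe that equality in the final chain forces equality in \emph{every} intermediate step, on the contact set $U$ and for a full-measure set of times. From step (iii), equality in the monotonicity defining $\theta$ forces $\theta$ to be \emph{attained} by balls of every radius $r\le r_0$ (and, since the large target ball can be taken with radius tending to infinity as $f$ is rescaled, in fact for all $r$), i.e. $|\{d(o,\cdot)<r\}| = \theta\, n|B^n|\int_0^r h^{n-1}(t)\,dt$ for all $r$; combined with the rigidity case of Zhu's volume comparison (the monotone quotient is constant) this pins down the metric in the ``radial'' directions. From step (ii), equality in the Riccati comparison forces the radial sectional curvatures to saturate the bound, and — crucially — since the exponential correction factors $e^{(n-1)b_0}$ and $e^{2r_0b_1}$ were obtained by integrating $\lambda$, equality there forces $\int_0^{r_0}\lambda = 0$ and $b_0=\int_0^\infty s\lambda(s)\,ds$ must be consistent with the sharp constant only when $b_0=0$; more carefully, the factor $\big(\tfrac{1+b_0}{e^{2r_0b_1+b_0}}\big)$ is strictly less than the ``Euclidean'' value $1$ unless $b_0=b_1=0$, because $1+b_0 \le e^{b_0} \le e^{2r_0 b_1 + b_0}$ with equality iff $b_0=0$ and $r_0 b_1=0$, and $r_0>0$ forces $b_1=0$, hence $\lambda\equiv 0$ since $\lambda$ is nonincreasing and continuous with $b_1=\int_0^\infty\lambda=0$. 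Thus the very fact that equality holds already yields $b_0=b_1=0$, and the problem collapses to the rigidity statement in Brendle's Theorem for ambient nonnegative Ricci curvature.

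Once $\lambda\equiv 0$, we have $h(t)=t$, the inequality of Theorem \ref{thm1.1} becomes exactly Brendle's inequality $\int_{\partial\Omega}f+\int_\Omega|Df|\ge n|B^n|^{1/n}\theta^{1/n}\big(\int_\Omega f^{n/(n-1)}\big)^{(n-1)/n}$, and equality there is the case treated in \cite{Br2}. I would then invoke (or reproduce) that rigidity analysis: equality in the AM–GM step (i) forces $D^2u = c\cdot g$ on the contact set with $|Du|$ equal to the transport radius, equality in the Jacobian/volume step forces the contact set to fill $\Omega$ and $\Omega$ to be foliated by the geodesic spheres of the transport, which identifies $\Omega$ with a Euclidean ball and, propagating the Hessian equation $D^2 u = c g$ together with the curvature rigidity $\mathrm{Ric}\equiv 0$ and the splitting forced by a nontrivial parallel-type structure, identifies $M$ with $\mathbb{R}^n$; finally $f$ must be constant because the level structure of $u$ and the equality in $|Df|$ versus the divergence term leave no room for $f$ to vary. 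The main obstacle I anticipate is step (ii)–(iii): carefully justifying that the correction factors are \emph{strictly} lossy unless $b_0=b_1=0$, i.e. ruling out a ``miraculous cancellation'' where a nonzero $\lambda$ still produces equality — this requires quantifying the gap in the Heintze--Karcher comparison in terms of $\int\lambda$ and checking it is positive whenever $\lambda\not\equiv 0$ on $[0,r_0]$; the rest is bookkeeping of equality cases plus citing \cite{Br2} for the Euclidean rigidity.
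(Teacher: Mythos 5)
Your overall plan --- trace the equality case back through the chain of inequalities in the proof of Theorem~\ref{thm1.1}, find the step that is strict whenever $\lambda\not\equiv 0$, conclude $\lambda\equiv 0$, and then hand off to Brendle's rigidity theorem --- is exactly the paper's strategy, and you are right that this is the place to look. But the way you try to extract $b_0=b_1=0$ contains a non sequitur. You argue that the factor $\frac{1+b_0}{e^{2r_0b_1+b_0}}$ is strictly less than $1$ unless $b_0=b_1=0$, and then conclude ``thus the very fact that equality holds already yields $b_0=b_1=0$.'' This does not follow: Theorem~\ref{thm1.2} assumes equality \emph{with that factor present on the right-hand side}, so the factor being less than $1$ is perfectly compatible with equality and refutes nothing. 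Comparing the constant to the Euclidean value $1$ is a red herring; what you need is a step \emph{inside} the proof that is a strict inequality whenever $\lambda\not\equiv 0$, and you never actually exhibit one --- you flag this yourself at the end as ``the main obstacle,'' and it is indeed the missing core of the argument.

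What the paper actually does is short. Equality in Theorem~\ref{thm1.1} forces equality in the intermediate estimates, and in particular in \eqref{h1234}:
\[
\lim_{t\to\infty}h'(t)=1+\int_0^\infty h(s)\lambda(s)\,ds \;\geq\; 1+\int_0^\infty s\lambda(s)\,ds=1+b_0.
\]
Equality here means $\int_0^\infty\bigl(h(s)-s\bigr)\lambda(s)\,ds=0$. Since $h''=\lambda h\ge 0$ with $h(0)=0$, $h'(0)=1$, one has $h(s)\ge s$, and if $\lambda\not\equiv 0$ then (because $\lambda$ is nonincreasing, hence positive near $0$) $h(s)>s$ for $s>0$ while $\lambda>0$ on a set of positive measure, so the integral is strictly positive. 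Hence $\lambda\equiv 0$, so $b_0=b_1=0$, $M$ has nonnegative Ricci curvature, and Brendle's Theorem~1.2 in \cite{Br2} gives the rest. (The paper also mentions equality in \eqref{d1} as an additional forced equality, but \eqref{h1234} already suffices.) Your appeals to rigidity in Zhu's volume comparison and to re-running the ABP/AM--GM rigidity analysis are not needed here: once $\lambda\equiv 0$ the statement reduces verbatim to the nonnegative-Ricci case and is cited, not re-proved. So: right plan, but the step that actually forces $\lambda\equiv 0$ should be the strictness of $h(s)\geq s$ inside \eqref{h1234}, not the size of the final constant.
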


Taking $f=1$ in Theorem \ref{thm1.1}, we obtain a sharp isoperimetric inequality:

\begin{cor}\label{cor1.3}
	Let $M,\Omega, r_0,\theta,b_0,b_1$ be as in Theorem \ref{thm1.1}. Then
	$$
	|\partial \Omega|\geq \Big(n|B^n|^{\frac{1}{n}}
	\theta^{\frac{1}{n}}
	\Big(\frac{1+b_0}{e^{2r_0b_1+b_0}}\Big)^\frac{n-1}{n}
	-
	2(n-1)b_1|\Omega|^{\frac{1}{n}}\Big)|\Omega|^{\frac{n-1}{n}}.
	$$
	Furthermore, the equality holds if and only if $M$ is isometric to Euclidean space and $\Omega$ is a ball.
\end{cor}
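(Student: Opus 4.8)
The plan is to obtain Corollary \ref{cor1.3} directly from Theorems \ref{thm1.1} and \ref{thm1.2} by specializing to the constant function $f\equiv 1$. First I would substitute $f\equiv 1$ into the inequality of Theorem \ref{thm1.1}. Since $\int_{\partial\Omega}1=|\partial\Omega|$, $\int_\Omega|D1|=0$, and $\int_\Omega 1=\int_\Omega 1^{\frac{n}{n-1}}=|\Omega|$, that inequality becomes
$$
|\partial\Omega|+2(n-1)b_1|\Omega|\geq n|B^n|^{\frac1n}\theta^{\frac1n}\Big(\frac{1+b_0}{e^{2r_0b_1+b_0}}\Big)^{\frac{n-1}{n}}|\Omega|^{\frac{n-1}{n}}.
$$
Writing $|\Omega|=|\Omega|^{\frac1n}\,|\Omega|^{\frac{n-1}{n}}$, moving $2(n-1)b_1|\Omega|$ to the right-hand side and factoring out $|\Omega|^{\frac{n-1}{n}}$ yields exactly the asserted isoperimetric inequality; so the inequality part needs nothing beyond Theorem \ref{thm1.1}.

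For the rigidity statement, the "only if" direction is immediate: the isoperimetric inequality in the corollary is, after the above algebraic rearrangement, literally the inequality of Theorem \ref{thm1.1} evaluated at $f\equiv 1$, so equality in the corollary is equality in that inequality with $f\equiv 1$. Hence Theorem \ref{thm1.2} applies and gives $b_0=b_1=0$, $M$ isometric to Euclidean space, and $\Omega$ a ball (the conclusion there that $f$ is constant is automatic here). For the converse, I would note that once $b_0=b_1=0$, which by \eqref{b1} and the continuity and nonnegativity of $\lambda$ is equivalent to $\lambda\equiv 0$, the function $h$ solving \eqref{h} is $h(t)=t$; consequently the asymptotic volume ratio \eqref{avr} of Euclidean space is $\theta=1$ and the factor $\frac{1+b_0}{e^{2r_0b_1+b_0}}$ equals $1$, so the right-hand side collapses to $n|B^n|^{\frac1n}|\Omega|^{\frac{n-1}{n}}$, which for a round ball $\Omega\subset\mathbb{R}^n$ equals $|\partial\Omega|$ by the classical volume computation. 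Thus equality holds precisely in that borderline case.

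I do not expect a substantial obstacle: the corollary is a formal consequence of the two preceding theorems. The only point demanding a little care is the bookkeeping in the rigidity part — verifying that equality in the stated isoperimetric inequality is genuinely the equality case of Theorem \ref{thm1.1} for $f\equiv 1$, and, in the converse direction, tracking that the hypothesis "isometric to Euclidean space" forces $\lambda\equiv 0$ (equivalently $b_0=b_1=0$) so that $h$, $\theta$ and the exponential factor all degenerate to their Euclidean values and the bound becomes the sharp classical isoperimetric inequality, whose equality case is exactly the round ball.
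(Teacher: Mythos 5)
Your approach coincides with the paper's: the corollary is obtained by substituting $f\equiv 1$ into Theorem~\ref{thm1.1} for the inequality (the algebra with $|\Omega| = |\Omega|^{1/n}|\Omega|^{(n-1)/n}$ is exactly right), and the ``only if'' direction of the rigidity follows from Theorem~\ref{thm1.2}, since equality in the corollary with $f\equiv 1$ is precisely equality in Theorem~\ref{thm1.1} with $f\equiv 1$. This is all the paper itself says (``Taking $f=1$ in Theorem~\ref{thm1.1}'').

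There is, however, a genuine gap in your ``if'' direction. You assert that the hypothesis ``$M$ is isometric to Euclidean space'' forces $\lambda\equiv 0$. That is not so: $\lambda$ is an auxiliary datum appearing in the hypothesis $\mathrm{Ric}_q\geq -(n-1)\lambda(d(o,q))g$, and for $M=\mathbb{R}^n$ (where $\mathrm{Ric}\equiv 0$) this inequality is satisfied by \emph{any} nonnegative nonincreasing $\lambda$ with $\int_0^\infty s\lambda\,ds<\infty$. If one chooses such a $\lambda\not\equiv 0$, so $b_0,b_1>0$, then $h'(\infty)=1+\int_0^\infty h\lambda\,ds>1$, hence $\theta=h'(\infty)^{-(n-1)}<1$, and moreover $e^{2r_0b_1+b_0}>1+b_0$; together with the positive subtracted term $2(n-1)b_1|\Omega|^{1/n}$, the right-hand side of the corollary is strictly smaller than $n|B^n|^{1/n}|\Omega|^{(n-1)/n}$, so even for a round ball the inequality is strict. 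The correct equality characterization is therefore: $b_0=b_1=0$ (equivalently $\lambda\equiv 0$), $M$ is isometric to $\mathbb{R}^n$, and $\Omega$ is a round ball — which is exactly what Theorem~\ref{thm1.2}'s conclusion supplies. The corollary's own phrasing drops the $b_0=b_1=0$ clause, so your confusion is understandable, but your attempt to derive $\lambda\equiv 0$ from ``$M$ Euclidean'' is not a valid step; you should instead carry the conclusion $b_0=b_1=0$ from Theorem~\ref{thm1.2} as part of the statement of the equality case.
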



\begin{rmk}
	If $M$ has nonnegative Ricci curvature, then $b_0=b_1=0$ and Corollary \ref{cor1.3} becomes
	$$
	|\partial\Omega|\geq n|B^n|^{\frac1n}\theta^{\frac1n},
	$$
	which was first given by V. Agostiniani, M. Fogagnolo, and L. Mazziari \cite{AFM} in dimension 3 and obtained by S. Brendle \cite{Br2} for any dimension, see also \cite{ML} for related results in $3\leq n\leq 7$.
\end{rmk}

Similarly, we may establish a Sobolev type inequality for a compact submanifold (possibly with boundary) in a Riemannian manifold with asymptotically nonnegative sectional curvature as follows.

\begin{thm}\label{thm1.4}
	Let $M$ be a complete noncompact $(n+p)$-dimensional manifold of asymptotically nonnegative sectional curvature with respect to a base point $o\in M$. Let $\Sigma$ be a compact $n$-dimensional submanifold of $M$ (possibly with boundary $\partial\Sigma$), and let $f$ be a positive smooth function on $\Sigma$. If $p\geq2$, then
	$$
	\begin{aligned}
		&\int_{\partial\Sigma}f+\int_\Sigma\sqrt{|D^\Sigma f|^2+f^2|H|^2}
		+2nb_1\int_\Sigma f\\
		&\geq
		n\Big(\frac{(n+p)|B^{n+p}|}{p|B^p|}\Big)^{\frac{1}{n}}\theta^{\frac{1}{n}}
		\Big(\frac{1+b_0}{e^{2r_0b_1+b_0}}\Big)^{\frac{n+p-1}{n}}
		\Big(\int_\Sigma f^{\frac{n}{n-1}}\Big)^{\frac{n-1}{n}},
	\end{aligned}
	$$
	where $r_0=\max\{d(o,x)|x\in\Sigma\}$, $H$ is the mean curvature vector of $\Sigma$, $\theta$ is the asymptotic volume ratio of $M$ given by \eqref{avr} and $b_0,b_1$ are defined in \eqref{b0} and \eqref{b1}. 
\end{thm}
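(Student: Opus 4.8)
The plan is to follow the ABP‑method of Brendle \cite{Br1,Br2}, the only new feature being that, since the curvature is merely asymptotically nonnegative, the Jacobi‑field comparison along geodesics leaving $\Sigma$ must be run against the model function $h$ of \eqref{h}; this is precisely where $b_0$, $b_1$ and $r_0$ enter. First I would put
$$\beta:=\frac{\int_{\partial\Sigma}f+\int_\Sigma\sqrt{|D^\Sigma f|^2+f^2|H|^2}+2nb_1\int_\Sigma f}{\int_\Sigma f^{\frac{n}{n-1}}}$$
and solve the Neumann problem
$$\mathrm{div}_\Sigma(f\,D^\Sigma u)=\beta f^{\frac{n}{n-1}}-\sqrt{|D^\Sigma f|^2+f^2|H|^2}-2nb_1 f\ \text{ in }\Sigma,\qquad \langle D^\Sigma u,\nu\rangle=1\ \text{ on }\partial\Sigma ,$$
where $\nu$ is the outward unit conormal of $\partial\Sigma$ in $\Sigma$; the compatibility condition holds by the choice of $\beta$, and since $f>0$ is smooth, standard elliptic theory gives a solution $u$ which is smooth in the interior and $C^{1,\gamma}$ up to $\partial\Sigma$.

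Next, for a dilation parameter $r>0$, I would set $U=\{x\in\Sigma\setminus\partial\Sigma:|D^\Sigma u(x)|<1\}$, let $A_r$ be the set of $(x,y)$ with $x\in U$, $y\in T_x^\perp\Sigma$ and $|D^\Sigma u(x)|^2+|y|^2<1$, and define the transport map $\Phi_r:A_r\to M$ by $\Phi_r(x,y)=\exp_x\big(r(D^\Sigma u(x)+y)\big)$. Minimizing over $\Sigma$ the function $x\mapsto ru(x)+\tfrac12\,d(x,\bar x)^2$ (with the usual substitute near the cut locus) and using that the Neumann condition $\langle D^\Sigma u,\nu\rangle=1$ forbids a boundary minimum, one shows, as in \cite{Br2}, that $\Phi_r(A_r)$ covers almost every point of the geodesic ball $B_r(x_{\min})$, where $x_{\min}\in\Sigma$ realizes $\min_\Sigma u$; in particular $B_{r-r_0}(o)\subseteq\Phi_r(A_r)$ up to a null set.

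The heart of the proof is the estimate for $|\mathrm{Jac}\,\Phi_r|$. Writing $d\Phi_r$ at $(x,y)$, with $\Phi_r(x,y)$ off the cut locus of $x$, as the differential of $\exp_x$ composed with $I_n+r\,\mathrm{Hess}^\Sigma u-r\langle y,II\rangle$ on the tangential block (plus the second fundamental form in the normal directions), the first factor is governed by Jacobi fields along the geodesic from $x$, of length $\ell\le r$. Since this geodesic starts in $B_{r_0}(o)$ and $\mathrm{Sec}\ge-\lambda(d(o,\cdot))$, the crude bound $d(o,\gamma(t))\ge|d(o,x)-t|$ and the monotonicity of $\lambda$ give $\int_0^\ell t\,\lambda(d(o,\gamma(t)))\,dt\le 2r_0b_1+b_0$, so a Gronwall argument bounds each of the $n+p-1$ transverse Jacobi fields at the endpoint by $e^{2r_0b_1+b_0}$ times its Euclidean value. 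For the second factor, the second‑order condition at the minimizer gives $I_n+r\,\mathrm{Hess}^\Sigma u-r\langle y,II\rangle\ge0$, so AM--GM and the PDE bound its determinant; the Cauchy--Schwarz inequality $\tfrac1f|\langle D^\Sigma f,D^\Sigma u\rangle|+|\langle y,H\rangle|\le\tfrac1f\sqrt{|D^\Sigma f|^2+f^2|H|^2}\,\sqrt{|D^\Sigma u|^2+|y|^2}$, valid on $A_r$ where $\sqrt{|D^\Sigma u|^2+|y|^2}<1$, makes the $\sqrt{\,\cdot\,}$ terms cancel (this is what the $2nb_1$ in the PDE is there to accommodate), and one arrives, in the regime $r\to\infty$, at
$$|\mathrm{Jac}\,\Phi_r(x,y)|\ \le\ e^{(2r_0b_1+b_0)(n+p-1)}\Big(\tfrac{r\beta}{n}f(x)^{\frac1{n-1}}\Big)^n\big(1+o(1)\big)$$
times the second‑fundamental‑form weight from the normal directions.

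Finally I would combine the two sides. By the area formula $|B_{r-r_0}(o)|\le\int_{A_r}|\mathrm{Jac}\,\Phi_r|$; integrating $y$ over its fibre $\{|y|^2<1-|D^\Sigma u(x)|^2\}$ produces the sharp dimensional constant $\frac{(n+p)|B^{n+p}|}{p|B^p|}=\int_{\mathbb{R}^n}(1+|\zeta|^2)^{-(n+p)/2}\,d\zeta$, so the right‑hand side is $\le e^{(2r_0b_1+b_0)(n+p-1)}\,\frac{p|B^p|}{n+p}\big(\tfrac{r\beta}{n}\big)^n\int_\Sigma f^{\frac{n}{n-1}}\,(1+o(1))$. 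On the other hand, the sharp Bishop‑type volume comparison — the monotonicity in \eqref{avr} of $\rho\mapsto|B_\rho(o)|\big/\big((n+p)|B^{n+p}|\int_0^\rho h^{n+p-1}\big)$, together with $\lim_{t\to\infty}h(t)/t=h'(\infty)\ge1+b_0$ — gives $|B_{r-r_0}(o)|\ge\theta\,|B^{n+p}|(1+b_0)^{n+p-1}(r-r_0)^{n+p}\,(1+o(1))$. Dividing by $r^{n+p}$ and letting $r\to\infty$ yields $\beta^n\int_\Sigma f^{\frac n{n-1}}\ge n^n\frac{(n+p)|B^{n+p}|}{p|B^p|}\,\theta\big(\tfrac{1+b_0}{e^{2r_0b_1+b_0}}\big)^{n+p-1}$, which is the asserted inequality since $\beta\int_\Sigma f^{n/(n-1)}$ is its left‑hand side. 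The hypothesis $p\ge2$ is used, exactly as in \cite{Br2}, in handling the normal directions. The main obstacle is the Jacobian step: one must run the Jacobi‑field comparison along geodesics issuing from $\Sigma$ in a manifold whose curvature is controlled only by $-\lambda(d(o,\cdot))$, track how $h$ and the constants $b_0,b_1,r_0$ weight each eigenvalue of $d\Phi_r$, and keep this compatible with the Euclidean‑model integration in the normal directions so as to retain the sharp constant $\frac{(n+p)|B^{n+p}|}{p|B^p|}$.
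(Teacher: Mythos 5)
Your overall strategy matches the paper's: normalize, solve a Neumann problem on $\Sigma$, transport along geodesics in the normal directions, and control the Jacobian of the transport by running a Riccati/Jacobi comparison against the model $h$ of~\eqref{h}, with the factor $e^{(n+p-1)(2r_0 b_1+b_0)}$ coming from $\int_0^r t\lambda(|d(o,\bar x)-t|Du+y||)dt\le 2r_0b_1+b_0$, and the extra $2nb_1$ in the PDE absorbing the Cauchy--Schwarz defect. That much is right and is exactly how the paper proceeds (Lemmas \ref{hess-sub}--\ref{vani-sub} and the estimates \eqref{tan1}--\eqref{sub1}).

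The gap is in the final step, where the sharp dimensional constant is extracted. You claim that integrating $y$ over its fibre $\{|y|^2<1-|D^\Sigma u|^2\}$ ``produces the sharp dimensional constant'' $\frac{(n+p)|B^{n+p}|}{p|B^p|}=\int_{\mathbb R^n}(1+|\zeta|^2)^{-(n+p)/2}d\zeta$, and then use only the crude inclusion $B_{r-r_0}(o)\subseteq\Phi_r(A_r)$. But the Jacobian bound you (and the paper) actually obtain in the manifold setting, $|\det\bar D\Phi_r|\le(\tfrac1r+f^{1/(n-1)})^n r^{n+p}e^{(n+p-1)(2r_0b_1+b_0)}$, is \emph{uniform in $y$}: after the Riccati comparison, the $y$-dependence of the normal block is washed out into the bound $\psi_{1,\alpha}(r)\le re^{\cdot}$. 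Integrating a $y$-uniform bound over the fibre gives at most $|B^p|(1-|D^\Sigma u|^2)^{p/2}\le|B^p|$, not $\frac{p|B^p|}{n+p}$; since $|B^p|>\frac{p|B^p|}{n+p}$, this produces a strictly weaker constant $\big(\frac{|B^{n+p}|}{|B^p|}\theta\big)^{1/n}$ instead of $\big(\frac{(n+p)|B^{n+p}|}{p|B^p|}\theta\big)^{1/n}$. The power-law integral you invoke does equal $\frac{(n+p)|B^{n+p}|}{p|B^p|}$, but it only emerges in Brendle's Euclidean argument \cite{Br1}, where the Jacobian is an \emph{explicit} function of $y$; it has no mechanism to appear once the curvature comparison has collapsed the $y$-dependence.

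What the paper does instead (following \cite{Br2}) is the annulus trick encoded in Lemma~\ref{tran-sub}: it covers $\{q:\sigma r<d(x,q)<r,\ \forall x\in\Sigma\}$ by the subset of $A_r$ with $\sigma^2<|D^\Sigma u|^2+|y|^2<1$, so the $y$-fibre is an annular shell of $\mathbb R^p$ of measure $\le\frac p2|B^p|(1-\sigma^2)$ (this is where $p\ge2$ enters); after dividing by $(n+p)\int_0^r h^{n+p-1}$, letting $r\to\infty$ (which, via \eqref{avr-f-s} and Lemma~\ref{poly-degree-1}, produces the factor $(1-\sigma^{n+p})$ on the volume side), and finally dividing by $1-\sigma$ and letting $\sigma\to1$, one obtains exactly $(n+p)|B^{n+p}|\theta\le p|B^p|\big(\tfrac{e^{2r_0b_1+b_0}}{1+b_0}\big)^{n+p-1}\int_\Sigma f^{n/(n-1)}$, which is the sharp constant. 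To fix your argument you must replace the full-ball coverage by this annulus coverage, and replace the power-law integral by the $\sigma\to1$ limit. As written, the proposal would yield a Sobolev inequality of the correct form but with the non-sharp constant $|B^{n+p}|/|B^p|$ in place of $(n+p)|B^{n+p}|/(p|B^p|)$.

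A smaller point: your coverage statement (``$\Phi_r(A_r)$ covers a.e.\ point of $B_r(x_{\min})$'') is looser than what is needed; the paper's Lemma~\ref{tran-sub} covers the common annulus $\{q:\sigma r<d(x,q)<r,\ \forall x\in\Sigma\}$ rather than a ball centred at a single $x_{\min}\in\Sigma$, and it is this formulation that feeds into the $\sigma$-limiting argument above.
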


Note that $(n+2)|B^{n+2}|=2|B^2||B^n|$. Hence, we obtain the following Sobolev type inequality for codimension 2:

\begin{cor}\label{cor1.5}
	Let $M$ be a complete noncompact $(n+2)$-dimensional manifold of asymptotically nonnegative sectional curvature with respect to a base point $o\in M$. Let $\Sigma$ be a compact $n$-dimensional submanifold of $M$ (possibly with boundary $\partial\Sigma$), and let $f$ be a positive smooth function on $\Sigma$. Then
	$$
	\begin{aligned}
		&\int_{\partial\Sigma}f+\int_\Sigma\sqrt{|D^\Sigma f|^2+f^2|H|^2}
		+2nb_1\int_\Sigma f\\
		&\geq
		n|B^n|^{\frac{1}{n}}\theta^{\frac{1}{n}}
		\Big(\frac{1+b_0}{e^{2r_0b_1+b_0}}\Big)^{\frac{n+1}{n}}
		\Big(\int_\Sigma f^{\frac{n}{n-1}}\Big)^{\frac{n-1}{n}},
	\end{aligned}
	$$
	where $r_0=\max\{d(o,x)|x\in\Sigma\}$, $H$ is the mean curvature vector of $\Sigma$, $\theta$ is the asymptotic volume ratio of $M$ given by \eqref{avr} and $b_0,b_1$ are defined in \eqref{b0} and \eqref{b1}.
\end{cor}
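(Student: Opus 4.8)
The plan is to obtain Corollary~\ref{cor1.5} as the $p=2$ specialization of Theorem~\ref{thm1.4}. First I would check that the data of the corollary is exactly an instance of the data of the theorem: a complete noncompact $(n+2)$-dimensional manifold $M$ of asymptotically nonnegative sectional curvature is the ambient manifold of Theorem~\ref{thm1.4} with $p=2$; the compact $n$-dimensional submanifold $\Sigma$ (possibly with boundary) and the positive smooth function $f$ are the same objects; and the restriction $p\geq 2$ required there is met, since $p=2$. Hence Theorem~\ref{thm1.4} applies verbatim and gives
$$
\int_{\partial\Sigma}f+\int_\Sigma\sqrt{|D^\Sigma f|^2+f^2|H|^2}+2nb_1\int_\Sigma f
\geq n\Big(\frac{(n+2)|B^{n+2}|}{2|B^2|}\Big)^{\frac{1}{n}}\theta^{\frac{1}{n}}\Big(\frac{1+b_0}{e^{2r_0b_1+b_0}}\Big)^{\frac{n+1}{n}}\Big(\int_\Sigma f^{\frac{n}{n-1}}\Big)^{\frac{n-1}{n}},
$$
where I have already used $p|B^p|=2|B^2|$ and $\frac{n+p-1}{n}=\frac{n+1}{n}$ for $p=2$.

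Next I would simplify the constant using the identity $(n+2)|B^{n+2}|=2|B^2||B^n|$ recorded just before the corollary (which is the Euclidean ball-volume recursion $|B^{k+2}|=\frac{2\pi}{k+2}|B^k|$ with $k=n$, together with $|B^2|=\pi$). This yields
$$
\Big(\frac{(n+2)|B^{n+2}|}{2|B^2|}\Big)^{\frac{1}{n}}=\Big(\frac{2|B^2||B^n|}{2|B^2|}\Big)^{\frac{1}{n}}=|B^n|^{\frac{1}{n}},
$$
so the right-hand side above becomes $n|B^n|^{\frac{1}{n}}\theta^{\frac{1}{n}}\big(\frac{1+b_0}{e^{2r_0b_1+b_0}}\big)^{\frac{n+1}{n}}\big(\int_\Sigma f^{\frac{n}{n-1}}\big)^{\frac{n-1}{n}}$, which is exactly the claimed inequality; all remaining quantities $r_0$, $H$, $\theta$, $b_0$, $b_1$ are unchanged.

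There is no real obstacle here beyond bookkeeping. The only points to be careful about are that $p=2$ is an admissible value in Theorem~\ref{thm1.4} (it is, since the theorem is stated for $p\ge 2$), and that the volume constant in that theorem is precisely $\big(\frac{(n+p)|B^{n+p}|}{p|B^p|}\big)^{1/n}$, so that substituting the ball-volume identity reproduces $|B^n|^{1/n}$ exactly with no stray factors. Hence the corollary follows immediately from the theorem.
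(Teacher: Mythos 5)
Your proof is correct and matches the paper's own derivation: the corollary is obtained by specializing Theorem~\ref{thm1.4} to $p=2$ and simplifying the constant via the ball-volume identity $(n+2)|B^{n+2}|=2|B^2||B^n|$, which the paper states immediately before Corollary~\ref{cor1.5}. Nothing more is needed.
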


The following result characterizes the case of equality in Corollary \ref{cor1.5}:

\begin{thm}\label{thm1.6}
	Let $M, \Sigma, f, r_0, H, \theta, b_0, b_1$ as in Corollary \ref{cor1.5}. If
	$$
	\begin{aligned}
		&\int_{\partial\Sigma}f+\int_\Sigma\sqrt{|D f|^2+f^2|H|^2}
		+2nb_1\int_\Sigma f\\
		&=
		n|B^n|^{\frac{1}{n}}\theta^{\frac{1}{n}}
		\Big(\frac{1+b_0}{e^{2r_0b_1+b_0}}\Big)^{\frac{n+1}{n}}
		\Big(\int_\Sigma f^{\frac{n}{n-1}}\Big)^{\frac{n-1}{n}}.
	\end{aligned}
	$$
	Then $b_0=b_1=0$ and $M$ is isometric to Euclidean space, $\Sigma$ is a flat ball, and $f$ is constant.
\end{thm}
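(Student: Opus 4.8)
The plan is to revisit the proof of Theorem~\ref{thm1.4} in the case $p=2$ --- which, through the identity $(n+2)|B^{n+2}|=2|B^2||B^n|$, is precisely the proof of Corollary~\ref{cor1.5} --- and to extract rigidity from each inequality used there. Recall its skeleton: one lets $\beta$ be the quotient of the left-hand side of Corollary~\ref{cor1.5} by $\int_\Sigma f^{\frac{n}{n-1}}$, solves the Neumann problem $\mathrm{div}_\Sigma(f\,D^\Sigma u)=n\beta f^{\frac{1}{n-1}}-\sqrt{|D^\Sigma f|^2+f^2|H|^2}$ on $\Sigma$ with $\langle D^\Sigma u,\nu\rangle=1$ along $\partial\Sigma$, isolates the open contact set $U\subset\Sigma\setminus\partial\Sigma$ on which the relevant lower paraboloid touches $u$ (so that $|D^\Sigma u|<1$ and the ambient Hessian of the support function is nonnegative there), and for $r>0$ studies the normal transport map $\Phi_r(x,y)=\exp_x\!\big(rD^\Sigma u(x)+y\big)$ on $A_r=\{(x,y):x\in U,\ y\in T^\perp_x\Sigma,\ r^2|D^\Sigma u(x)|^2+|y|^2<r^2\}$. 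The inequality is assembled from: (a) a covering statement, that $\Phi_r(A_r)$ contains the set of points within distance $r$ of $\Sigma$; (b) the monotonicity, toward $\theta$, of $|\{d(o,\cdot)<\rho\}|/(n|B^n|\int_0^\rho h^{n-1})$ together with $d(o,q)\le r_0+d(\Sigma,q)$; (c) a Jacobi-field comparison which, under $\mathrm{Sec}\ge-\lambda(d(o,\cdot))$, bounds $|\mathrm{Jac}\,\Phi_r|$ by the product of $\det$ of a matrix of the form $r\,D^2_\Sigma u-\langle H,y\rangle\,\mathrm{Id}+\cdots$ with powers of $h(\rho)/\rho$ and $h'(\rho)$ along the transporting geodesics; (d) the arithmetic-geometric mean inequality applied to that determinant; and (e) the elementary bounds on $h$ and $h'$ coming from \eqref{b0}--\eqref{b1}, which absorb the curvature accumulated along geodesics of length $\le r$ issuing from $\Sigma$ and produce both the factor $\big((1+b_0)/e^{2r_0b_1+b_0}\big)^{(n+1)/n}$ and the term $2nb_1\int_\Sigma f$ on the left.

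Now assume equality holds. Then each of (a)--(e) is an equality, and the first task --- which I expect to be the main obstacle --- is to conclude $\lambda\equiv0$, hence $b_0=b_1=0$. Equality in (c) forces $\mathrm{Sec}_q=-\lambda(d(o,q))$ for every plane section tangent to a transporting geodesic and forces the ambient Jacobi fields to coincide with the model ones driven by $h$; equality in (e) forces $h$ to saturate the elementary $b_0,b_1,r_0$-estimates on every interval $[0,\rho]$ reached by such a geodesic, and since $h$ solves $h''=\lambda h$ with $h(0)=0,\ h'(0)=1$, saturation at a single $\rho>0$ forces $\lambda\equiv0$ on $[0,\rho]$. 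Since the two $r$-independent sides of the inequality agree for the given $\beta$, equality must persist in each intermediate step at every scale $r$ (this is the delicate point that is already present in Brendle's rigidity analysis in \cite{Br2} and carries over verbatim), so the transporting geodesics reach arbitrarily large distances; being nonincreasing with $\int_0^\infty s\lambda(s)\,ds<\infty$, the function $\lambda$ must then vanish identically, i.e. $b_0=b_1=0$ and $2nb_1\int_\Sigma f=0$. It is precisely here that one cannot simply quote \cite{Br2}: when $b_0$ or $b_1$ is positive the constant in Corollary~\ref{cor1.5} is strictly smaller than Brendle's --- both because $(1+b_0)/e^{2r_0b_1+b_0}<1$ and because $\theta$, computed with $h>t$, is smaller --- so equality in our weaker inequality does not feed into the inequality of \cite{Br2}; instead one must check directly that steps (c) and (e) are strictly lossy whenever $\lambda\not\equiv0$.

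Once $b_0=b_1=0$ we have $h(t)=t$, $M$ has nonnegative sectional curvature, and the inequality of Corollary~\ref{cor1.5} is exactly Brendle's sharp Michael-Simon-Sobolev inequality for codimension-two submanifolds \cite{Br2}, now holding with equality; it remains to rerun the rigidity analysis of \cite{Br2}, all $b_i$-terms being absent. Equality in the arithmetic-geometric mean step (d) forces $r\,D^2_\Sigma u-\langle H,y\rangle\,\mathrm{Id}$ to be a multiple of the identity at each $(x,y)\in A_r$, so that $D^2_\Sigma u=\tfrac{\Delta_\Sigma u}{n}\,g$ on $U$ and $H$ lies in a single normal direction; equality in (a) and (c) forces $\Phi_r$ to be a measure-preserving bijection onto the full $r$-neighbourhood of $\Sigma$ for every $r$, which by the equality case of the Jacobi-field comparison forces all ambient sectional curvatures along these geodesics to vanish, whence, letting $r\to\infty$, the complete manifold $M$ is flat and, having positive asymptotic volume ratio, is isometric to $\mathbb{R}^{n+2}$. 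Finally, analysing the Neumann condition and the envelope defining $U$ forces $U=\Sigma\setminus\partial\Sigma$, $f$ constant, $H\equiv0$ (so $\Sigma$ is totally geodesic, i.e. an affine $n$-plane of $\mathbb{R}^{n+2}$), and $u$ equal to a constant multiple of $\tfrac12|x-x_0|^2$ on that plane, so that $U$ --- and therefore $\Sigma$ --- is a round, flat ball. This is exactly the conclusion of the theorem; the analogous characterization of equality in the domain setting (Theorem~\ref{thm1.2}) is obtained the same way, starting from the argument for Theorem~\ref{thm1.1}.
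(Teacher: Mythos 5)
Your proposal is correct and follows the same underlying strategy as the paper: first extract $\lambda\equiv0$ (hence $b_0=b_1=0$) from the equality, then observe that once $\lambda\equiv0$ the inequality collapses to Brendle's codimension-two Michael-Simon-Sobolev inequality, whose rigidity case is known. The paper's actual proof is three sentences long: it notes that equality in the theorem forces equality in \eqref{h1234} (the limiting estimate $\lim_{t\to\infty}h'(t)\ge 1+b_0$, used in the final $r\to\infty$ step) and in \eqref{tan1}, either of which forces $\lambda\equiv0$; having reduced to nonnegative sectional curvature, it then simply \emph{cites} Theorem~1.6 of \cite{Br2}. You instead re-run Brendle's rigidity analysis in detail (AM-GM forcing $D^2_\Sigma u$ to be a multiple of the identity, measure-preservation of $\Phi_r$, vanishing curvature along transporting geodesics, etc.), which is not wrong but is unnecessary work -- once $b_0=b_1=0$ the hypothesis is literally Brendle's equality hypothesis, and his theorem can be quoted verbatim, as the paper does. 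One small imprecision worth flagging in your argument for $\lambda\equiv0$: you claim equality ``must persist in each intermediate step at every scale $r$,'' but the final inequality is obtained in a limit $r\to\infty$, $\sigma\to1$, so one only gets equality in the \emph{limiting} steps of the chain; the paper's route (equality in the limiting bound \eqref{h1234}, namely $\lim_{r\to\infty}h'(r)=1+b_0$, combined with $h\geq t$ and $h''=\lambda h\ge0$, immediately gives $h(t)=t$ and $\lambda\equiv0$) is the clean way to say what you are gesturing at. Your separate worry that one ``cannot simply quote \cite{Br2}'' is exactly right for the $\lambda\equiv0$ step and exactly wrong for the subsequent rigidity step.
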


Letting $f=1$ in Corollary \ref{cor1.5}, we obtain a sharp isoperimetric inequality for minimal submanifolds of codimension 2 as follows.

\begin{cor}
	Let $M$ be a complete noncompact $(n+2)$-dimensional manifold of asymptotically nonnegative sectional curvature with respect to a base point $o\in M$. Let $\Sigma$ be a compact $n$-dimensional mininal submanifold of $M$ (possibly with boundary $\partial\Sigma$). Then
	$$
	|\partial\Sigma|\geq n\Big(|B^n|^{\frac{1}{n}}\theta^{\frac{1}{n}}
	\big(\frac{1+b_0}{e^{2r_0b_1+b_0}}\big)^{\frac{n+1}{n}}-2b_1|\Sigma|^{\frac{1}{n}}\Big)|\Sigma|^{\frac{n-1}{n}},
	$$
	where $r_0=\max\{d(o,x)|x\in \Sigma\}$, $\theta$ is the asymptotic volume ratio of $M$ given by \eqref{avr} and $b_0,b_1$ are defined in \eqref{b0} and \eqref{b1}. Furthermore, the equality holds if and only if $M$ is isometric to Euclidean space and $\Sigma$ is a flat ball.
\end{cor}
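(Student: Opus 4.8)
The plan is to specialize Corollary \ref{cor1.5} to the constant function $f\equiv 1$, use minimality of $\Sigma$ to annihilate the gradient and mean‑curvature terms, then rearrange algebraically; the rigidity statement is read off from Theorem \ref{thm1.6} together with a direct check on the Euclidean model. No new analytic input is needed beyond what the excerpt already provides.

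First I would apply Corollary \ref{cor1.5} with $f\equiv 1$. Since $\Sigma$ is minimal, $H\equiv 0$, and since $f$ is constant, $D^\Sigma f\equiv 0$; hence $\int_\Sigma\sqrt{|D^\Sigma f|^2+f^2|H|^2}=0$. Moreover $\int_{\partial\Sigma}f=|\partial\Sigma|$, $\int_\Sigma f=|\Sigma|$, and $\big(\int_\Sigma f^{n/(n-1)}\big)^{(n-1)/n}=|\Sigma|^{(n-1)/n}$. Substituting, Corollary \ref{cor1.5} reduces to
$$
|\partial\Sigma|+2nb_1|\Sigma|\geq n|B^n|^{\frac1n}\theta^{\frac1n}\Big(\frac{1+b_0}{e^{2r_0b_1+b_0}}\Big)^{\frac{n+1}{n}}|\Sigma|^{\frac{n-1}{n}}.
$$
Moving $2nb_1|\Sigma|$ to the right-hand side and writing $|\Sigma|=|\Sigma|^{1/n}|\Sigma|^{(n-1)/n}$ so as to factor out $|\Sigma|^{(n-1)/n}$ yields exactly the asserted inequality.

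Next I would treat the equality case. If equality holds in the isoperimetric inequality, then running the above steps backwards shows that equality holds in Corollary \ref{cor1.5} for the admissible function $f\equiv 1$; Theorem \ref{thm1.6} then forces $b_0=b_1=0$, $M$ isometric to Euclidean space $\mathbb{R}^{n+2}$, and $\Sigma$ a flat ball (the conclusion "$f$ is constant" being automatic here). Conversely, if $M$ is Euclidean and $\Sigma$ is a flat ball of radius $\rho$ lying in an $n$-plane, one may take $\lambda\equiv 0$, so that $b_0=b_1=0$, $h(t)=t$, and $\theta=1$; a flat ball is totally geodesic, hence minimal, so it is an admissible competitor. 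The right-hand side then collapses to $n|B^n|^{1/n}|\Sigma|^{(n-1)/n}=n|B^n|^{1/n}(|B^n|\rho^n)^{(n-1)/n}=n|B^n|\rho^{n-1}=|\partial\Sigma|$, so equality indeed holds.

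The only "obstacle" is bookkeeping: one must be careful with the algebraic rearrangement and must verify that the constants $\theta$, $b_0$, $b_1$ degenerate to their classical Euclidean values, $1$, $0$, $0$, in order to close the converse direction of the rigidity statement. All the substantive geometric content is already contained in Corollary \ref{cor1.5} and Theorem \ref{thm1.6}.
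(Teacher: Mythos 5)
Your proposal is correct and follows exactly the route the paper takes: specialize Corollary \ref{cor1.5} to $f\equiv 1$, use minimality to kill the gradient and mean curvature terms, factor out $|\Sigma|^{(n-1)/n}$, and invoke Theorem \ref{thm1.6} for the rigidity (with the trivial Euclidean verification for the converse). Nothing is missing.
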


It is obvious that the above inequalities are nontrivial only when $\theta >0$.  We say that a complete Riemannian manifold with asymptotically nonnegative (Ricci) curvature has maximal volume growth if $\theta >0$.  Examples of such manifolds may be found in \cite{Ab1},  \cite{GH}, \cite{Calabi}, \cite{Kr1,Kr2}, and the first case of Theorem 1.2 in \cite{Un}, etc.

\section{The case of domains}	
Let $(M,g)$ be a complete noncompact $n$-dimensional Riemannian manifold of asymptotically nonnegative Ricci curvature with respect to a base point $o\in M$. Let $\Omega$ be a compact domain in $M$ with smooth boundary $\partial\Omega$ and $f$ be a smooth positive function on $\Omega$. Without loss of generality, we assume hereafter that $\Omega$ is connected.

By scaling, we may assume that
\begin{equation}\label{scale1}
	\int_{\partial\Omega} f+\int_\Omega|D f|+\int_\Omega2(n-1)b_1f=n\int_\Omega f^\frac{n}{n-1}.
\end{equation}
Due to \eqref{scale1} and the connectedness of $\Omega$, we can find a solution of the following Neumann boundary problem
\begin{equation}
	\left\{
	\begin{aligned}
		&\mathrm{div}(fD u)=nf^\frac{n}{n-1}-2(n-1)b_1f-|D f|,& \text{ in }\Omega,\\
		&\langle Du,\nu\rangle=1,
		& \text{ on }\partial\Omega,
	\end{aligned}
	\right.\label{neumann}
\end{equation}
where $\nu$ is the outward unit normal vector field along $\partial\Omega$. By standard elliptic regularity theory (see Theorem 6.31 in \cite{GT}), we know that $u\in C^{2,\gamma}$ for each $0<\gamma<1$.

As in \cite{Br2}, we set
$$
U:=\{x\in\Omega\setminus\partial\Omega:|Du(x)|<1\}.
$$
For any $r>0$, let 
$$
A_r=\{\bar{x}\in U: ru(x)+\frac{1}{2}{d}
(x,\exp_{\bar{x}}(rD u(\bar{x})))^2\geq
ru(\bar{x})+\frac{1}{2}r^2|D u(\bar{x})|^2,\ 
\forall x\in\Omega\}.
$$
Define a transport map $\Phi_r:\Omega\to M$ for each $r>0$ by
$$
\Phi_r(x)=\exp_x(rD u(x)), \quad\forall x\in\Omega.
$$
Using the regularity of the solution $u$ of the Neumann problem, we known that the map $\Phi_r$ is of class $C^{1,\gamma},0<\gamma<1$.

\begin{lem}\label{hessu}
	Assume that $x\in U$. Then we have
	\begin{equation*}
		\frac{1}{n}\Delta u\leq f^\frac{1}{n-1}-2\Big(\frac{n-1}{n}\Big)b_1.
	\end{equation*}
\end{lem}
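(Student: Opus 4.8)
The plan is to read the estimate straight off the Neumann equation \eqref{neumann}, with only an elementary Cauchy--Schwarz step. First I would expand the divergence by the product rule, $\mathrm{div}(fD u)=f\,\Delta u+\langle Df,Du\rangle$, and substitute into \eqref{neumann}, which gives, pointwise on $\Omega$,
$$
f\,\Delta u = nf^{\frac{n}{n-1}}-2(n-1)b_1 f-|Df|-\langle Df,Du\rangle .
$$

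Next I would localize to a point $x\in U$ and control the last two terms. By the Cauchy--Schwarz inequality $-\langle Df,Du\rangle\le |Df|\,|Du|$, so
$$
-|Df|-\langle Df,Du\rangle\le -|Df|\,(1-|Du|).
$$
Since $x\in U$ means $|Du(x)|<1$, the right-hand side is nonpositive, and therefore $f\,\Delta u\le nf^{\frac{n}{n-1}}-2(n-1)b_1 f$ at $x$. Dividing by $nf$, which is legitimate because $f$ is positive on $\Omega$, yields $\tfrac1n\Delta u\le f^{\frac1{n-1}}-2\bigl(\tfrac{n-1}{n}\bigr)b_1$, which is exactly the claimed inequality.

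There is essentially no obstacle in this lemma; the only points requiring attention are the sign bookkeeping in the Cauchy--Schwarz step (this is precisely where the definition of $U$ as the subset of $\Omega\setminus\partial\Omega$ with $|Du|<1$ is used) and the positivity of $f$ needed to divide through. The real work lies elsewhere: this pointwise bound on $\Delta u$ is the ingredient that will subsequently be fed, together with the arithmetic--geometric mean inequality applied to the eigenvalues of $D^2u$ on the contact set $A_r$ and the Ricci comparison estimates for the Jacobian of $\Phi_r$, into the ABP-type argument that produces Theorem \ref{thm1.1}.
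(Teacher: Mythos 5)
Your proof is correct and follows essentially the same route as the paper: expand the divergence, substitute into the Neumann equation, apply Cauchy--Schwarz together with $|Du|<1$ to discard the $-|Df|-\langle Df,Du\rangle$ terms, and divide by $nf>0$. The only cosmetic difference is that you keep the factor $(1-|Du|)$ explicit before noting it is nonnegative, whereas the paper directly bounds $-\langle Df,Du\rangle\le|Df|$; both are the same estimate.
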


\begin{proof}
	Using the Cauchy-Schwarz inequality and the property that $|Du|<1$ for $x\in U$, we get
	\begin{equation*}
		-\langle Df,Du\rangle\leq|Df|.
	\end{equation*}
	In terms of \eqref{neumann}, we derive that
	\begin{equation*}
		\begin{aligned}
			f\Delta u&=nf^\frac{n}{n-1}-2(n-1)b_1f-|Df|
			-\langle Df,Du\rangle\\
			&\leq nf^\frac{n}{n-1}-2(n-1)b_1f.
		\end{aligned}
	\end{equation*}
	This proves the assertion.
\end{proof}

The proofs of the following three lemmas are identical to those for Lemmas 2.2-2.4 in \cite{Br2} without any change for the case of asymptotically nonnegative Ricci curvature. So we omit them here.

\begin{lem}\label{tran}
	The set
	\[
	\{q\in M: d(x,q)<r,\  \forall x\in\Omega\}
	\]
	is contained in $\Phi_r(A_r)$.
\end{lem}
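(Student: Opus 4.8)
The plan is to follow the ABP-type argument of Brendle: given $q$ in the target set, we produce a preimage $\bar x \in A_r$ with $\Phi_r(\bar x) = q$. First I would fix such a $q$ and consider the function
$$
x \longmapsto ru(x) + \tfrac{1}{2}\, d(x,q)^2, \qquad x \in \Omega.
$$
Since $\Omega$ is compact this function attains a minimum at some point $\bar x \in \Omega$. The hypothesis $d(x,q) < r$ for \emph{all} $x \in \Omega$ is what keeps the minimizer away from $\partial\Omega$: at a boundary point one compares the inward variation using the Neumann condition $\langle Du,\nu\rangle = 1$, and the bound $d(\bar x, q) < r$ forces the directional derivative to be negative, contradicting minimality on the boundary. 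Hence $\bar x \in \Omega \setminus \partial\Omega$ is an interior minimizer.

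Next I would extract the first- and second-order information at $\bar x$. Writing $d_q(\cdot) = d(\cdot,q)$, the first variation gives $rDu(\bar x) + \tfrac12 D(d_q^2)(\bar x) = 0$, i.e. $rDu(\bar x) = -d_q(\bar x)\,Dd_q(\bar x)$, a vector of length $d_q(\bar x) < r$; this already shows $|Du(\bar x)| < 1$, so $\bar x \in U$. Moreover, this identity says exactly that the minimizing geodesic from $\bar x$ to $q$ has initial velocity $-\,Du(\bar x)\cdot r / d_q(\bar x)$ rescaled appropriately, so that $\exp_{\bar x}(rDu(\bar x))$ runs along that geodesic and, because $r > d_q(\bar x)$ and $\bar x$ is not conjugate to $q$ along it (minimality), one checks $\exp_{\bar x}(rDu(\bar x)) = q$. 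Thus $\Phi_r(\bar x) = q$.

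Finally I would verify $\bar x \in A_r$. For every $x \in \Omega$ minimality gives
$$
ru(x) + \tfrac12 d_q(x)^2 \;\ge\; ru(\bar x) + \tfrac12 d_q(\bar x)^2
\;=\; ru(\bar x) + \tfrac12 r^2 |Du(\bar x)|^2,
$$
the last equality by the first-variation identity. Since $q = \exp_{\bar x}(rDu(\bar x))$, the term $d_q(x)^2 = d\big(x, \exp_{\bar x}(rDu(\bar x))\big)^2$, which is precisely the defining inequality for membership in $A_r$. Therefore $q = \Phi_r(\bar x) \in \Phi_r(A_r)$.

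I expect the main obstacle to be the boundary exclusion step — rigorously ruling out $\bar x \in \partial\Omega$ by combining the Neumann condition with the strict inequality $d(x,q) < r$ — and the subtle point that the exponential image genuinely reaches $q$ even though $r$ may exceed the distance $d_q(\bar x)$; this uses that the relevant geodesic is minimizing up to $q$ (no conjugate points before $q$), which follows from $\bar x$ being a minimum of a function involving $\tfrac12 d_q^2$. These are exactly the places where one imports the standard facts about $\tfrac12 d(\cdot,q)^2$ being semiconcave and smooth away from the cut locus; since the statement is asserted to go through verbatim from \cite{Br2}, no curvature assumption enters here.
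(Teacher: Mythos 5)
Your proof is correct and follows the same minimization argument as Brendle's Lemma 2.2 in \cite{Br2}, which the paper cites verbatim: minimize $x\mapsto ru(x)+\tfrac12 d(x,q)^2$ over the compact set $\Omega$, use the Neumann condition $\langle Du,\nu\rangle=1$ together with the strict bound $d(\bar x,q)<r$ to rule out a boundary minimizer, and read off from the first-order condition that $|Du(\bar x)|<1$, that $\Phi_r(\bar x)=q$, and that $\bar x\in A_r$.

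One point in your closing remark is off and worth straightening out: you flag as a ``subtle point'' that ``the exponential image genuinely reaches $q$ even though $r$ may exceed the distance $d_q(\bar x)$,'' and you appeal to minimality and absence of conjugate points. No such argument is needed. The first-order condition gives $rDu(\bar x)=d(\bar x,q)\,\gamma'(0)$ where $\gamma$ is a unit-speed minimizing geodesic from $\bar x$ to $q$; the vector $rDu(\bar x)$ therefore has length exactly $d(\bar x,q)$, so $\exp_{\bar x}(rDu(\bar x))=\gamma(d(\bar x,q))=q$ by the definition of $\gamma$. That $r$ itself exceeds $d(\bar x,q)$ is irrelevant, because the exponential map is applied to the vector $rDu(\bar x)$, not to a vector of length $r$. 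The genuine technical care required in this step is rather the use of a smooth upper support function for $\tfrac12 d(\cdot,q)^2$ at $\bar x$ when $\bar x$ lies in the cut locus of $q$, which you implicitly invoke by appealing to semiconcavity; that part is handled correctly.
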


\begin{lem}
	Assume that $\bar{x}\in A_r$, and let 
	$\bar{\gamma}(t):=\exp_{\bar{x}}(tDu(\bar{x}))$ for all $t\in[0,r]$. 
	If $Z$ is a smooth vector field along $\bar{\gamma}$ satisfying
	$Z(r)=0$, then
	\[
	(D^2u)(Z(0),Z(0))+\int_0^r\big(|D_tZ(t)|^2-R(\bar{\gamma}'(t),Z(t),\bar{\gamma}'(t),Z(t))\big)\ dt\geq0.
	\]	
\end{lem}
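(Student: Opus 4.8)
Write $q:=\Phi_r(\bar{x})=\exp_{\bar{x}}(rDu(\bar{x}))$, so that $\bar\gamma$ is the geodesic from $\bar{x}$ to $q$ with $\bar\gamma'(0)=Du(\bar{x})$, of constant speed $|Du(\bar{x})|$. The plan is to encode the second--order information at $\bar{x}$ in a scalar function of one real variable $s$, built from a variation of $\bar\gamma$ whose endpoint stays fixed at $q$, and then to extract the inequality from the elementary fact that this function has a minimum at $s=0$. The mechanism is that for \emph{any} curve $c$ on $[0,1]$ joining a point $x$ to $q$ one has $\tfrac12 d(x,q)^2\le E(c)$ (Cauchy--Schwarz), while the curve $\bar\gamma(r\,\cdot\,)$ joining $\bar{x}$ to $q$ has energy exactly $\tfrac12 r^2|Du(\bar{x})|^2$, which is precisely the quantity on the right--hand side of the inequality defining $A_r$; in particular this route sidesteps any discussion of the cut locus of $q$.

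Concretely, let $W$ be the vector field along $\tau\mapsto\bar\gamma(r\tau)$, $\tau\in[0,1]$, given by $W(\tau):=Z(r\tau)$, so that $W(0)=Z(0)$ and $W(1)=Z(r)=0$. By completeness of $M$ the formula $\Gamma(s,\tau):=\exp_{\bar\gamma(r\tau)}\!\big(sW(\tau)\big)$ defines a smooth variation with $\Gamma(0,\tau)=\bar\gamma(r\tau)$, $\partial_s\Gamma(0,\tau)=Z(r\tau)$, $\Gamma(s,1)=\exp_q(sZ(r))=q$, and $\sigma(s):=\Gamma(s,0)=\exp_{\bar{x}}(sZ(0))$ a geodesic. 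Put $c_s:=\Gamma(s,\cdot\,)$, $E(c_s):=\tfrac12\int_0^1|\partial_\tau\Gamma(s,\tau)|^2\,d\tau$, and
\[
g(s):=r\,u(\sigma(s))+E(c_s).
\]
For $|s|$ small we have $\sigma(s)\in\Omega\setminus\partial\Omega$, and combining $\tfrac12 d(\sigma(s),q)^2\le E(c_s)$ with the inequality defining $A_r$ evaluated at $x=\sigma(s)$ gives
\[
g(s)\ \ge\ r\,u(\sigma(s))+\tfrac12 d(\sigma(s),q)^2\ \ge\ r\,u(\bar{x})+\tfrac12 r^2|Du(\bar{x})|^2\ =\ r\,u(\bar{x})+E(c_0)\ =\ g(0),
\]
where $E(c_0)=\tfrac12 r^2|Du(\bar{x})|^2$ because $c_0=\bar\gamma(r\,\cdot\,)$ has constant speed $r|Du(\bar{x})|$. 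Since $u\in C^{2,\gamma}$, $g$ is $C^2$ near $0$, hence $g'(0)=0$ and $g''(0)\ge0$.

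Finally I would compute $g''(0)$. As $\sigma$ is a geodesic, $\tfrac{d^2}{ds^2}\big|_{0}u(\sigma(s))=(D^2u)(Z(0),Z(0))$ by the very definition of the Hessian. As $c_0=\bar\gamma(r\,\cdot\,)$ is a geodesic and both boundary contributions vanish --- at $\tau=1$ because $\Gamma(\cdot,1)\equiv q$ forces $\partial_s\Gamma\equiv 0$ there, and at $\tau=0$ because $\Gamma(\cdot,0)$ is a geodesic, so $D_s\partial_s\Gamma=0$ there --- the second variation of energy formula reads
\[
\frac{d^2}{ds^2}\Big|_{0}E(c_s)=\int_0^1\Big(|D_\tau\partial_s\Gamma|^2-R(\partial_\tau\Gamma,\partial_s\Gamma,\partial_\tau\Gamma,\partial_s\Gamma)\Big)\Big|_{s=0}\,d\tau.
\]
Inserting $\partial_s\Gamma(0,\tau)=Z(r\tau)$, $D_\tau\partial_s\Gamma(0,\tau)=r\,(D_tZ)(r\tau)$, $\partial_\tau\Gamma(0,\tau)=r\,\bar\gamma'(r\tau)$ and substituting $t=r\tau$ turns this into $r\int_0^r\big(|D_tZ|^2-R(\bar\gamma',Z,\bar\gamma',Z)\big)\,dt$. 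Therefore $0\le g''(0)=r\big[(D^2u)(Z(0),Z(0))+\int_0^r\big(|D_tZ|^2-R(\bar\gamma',Z,\bar\gamma',Z)\big)\,dt\big]$, and division by $r>0$ yields the assertion.

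I expect the second variation of energy step to be the only place requiring real care: one must set up $\Gamma$ so that both boundary terms genuinely vanish, and keep precise track of the factors of $r$ produced by reparametrising $[0,r]$ onto $[0,1]$. The construction of $\Gamma$, the regularity input $u\in C^{2,\gamma}$, and the Cauchy--Schwarz/$A_r$ comparison are all routine.
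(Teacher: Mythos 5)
Your argument is correct and is essentially the proof of Lemma 2.3 in Brendle \cite{Br2}, to which the paper defers without reproducing it: one builds the variation $\gamma_s(t)=\exp_{\bar\gamma(t)}(sZ(t))$, observes via the $A_r$ condition and Cauchy--Schwarz that $s\mapsto r\,u(\gamma_s(0))+\tfrac{r}{2}\int_0^r|\gamma_s'|^2\,dt$ is minimized at $s=0$, and then reads off the conclusion from the second variation of energy with both boundary terms vanishing. Your reparametrization of the variation to the interval $[0,1]$ is a purely cosmetic difference from Brendle's presentation on $[0,r]$.
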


\begin{lem}\label{jacovani}
	Assume that $\bar{x}\in A_r$, and let 
	$\bar{\gamma}(t):=\exp_{\bar{x}}(tD u(\bar{x}))$ for all $t\in[0,r]$. Moreover, 
	let $\{e_1,\dots,e_n\}$ be an orthonormal basis of $T_{\bar{x}}M$. Suppose that $W$ 
	is a Jacobi field along $\bar{\gamma}$ satisfying 	
	\[
	\langle D_tW(0),e_j\rangle=(D^2u)(W(0),e_j),\quad 1\leq j\leq n.
	\]
	If $W(\tau)=0$ for some $\tau\in(0,r)$, then $W$ vanishes identically.
\end{lem}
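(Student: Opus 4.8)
The plan is to argue by contradiction using the index-form inequality of the previous lemma. Suppose $W(\tau)=0$ for some $\tau\in(0,r)$ but $W$ does not vanish identically. Since $W$ solves the linear second-order Jacobi equation $D_t^2W+R(\bar\gamma',W)\bar\gamma'=0$ along $\bar\gamma$, the pair $(W(\tau),D_tW(\tau))$ determines $W$ uniquely; hence $W\not\equiv0$ forces $D_tW(\tau)\neq0$. The goal is then to produce a smooth vector field $Y$ along $\bar\gamma$ with $Y(r)=0$ and $(D^2u)(Y(0),Y(0))+\int_0^r(|D_tY|^2-R(\bar\gamma',Y,\bar\gamma',Y))\,dt<0$, which contradicts the previous lemma.

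To this end I would introduce the broken Jacobi field $Z$ along $\bar\gamma$ defined by $Z(t)=W(t)$ for $t\in[0,\tau]$ and $Z(t)=0$ for $t\in[\tau,r]$; it is continuous (because $W(\tau)=0$), piecewise smooth, and satisfies $Z(r)=0$, while $D_tZ$ jumps from $D_tW(\tau)\neq0$ to $0$ at $t=\tau$. Write $I(X,Y):=(D^2u)(X(0),Y(0))+\int_0^r\big(\langle D_tX,D_tY\rangle-R(\bar\gamma',X,\bar\gamma',Y)\big)\,dt$ for the associated symmetric index form. Integrating $\tfrac{d}{dt}\langle D_tW,W\rangle$ over $[0,\tau]$, using the Jacobi equation and $W(\tau)=0$, gives $\int_0^\tau(|D_tW|^2-R(\bar\gamma',W,\bar\gamma',W))\,dt=-\langle D_tW(0),W(0)\rangle$; and expanding $W(0)$ in the basis $\{e_j\}$ together with the hypothesis $\langle D_tW(0),e_j\rangle=(D^2u)(W(0),e_j)$ yields $\langle D_tW(0),W(0)\rangle=(D^2u)(W(0),W(0))$. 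Hence $I(Z,Z)=0$.

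Next I would choose an auxiliary smooth vector field $V$ along $\bar\gamma$, compactly supported in $(0,r)$ near $\tau$, with $V(\tau)=D_tW(\tau)$ — for instance $V(t)=\chi(t)P(t)$, where $P$ is the parallel transport of $D_tW(\tau)$ along $\bar\gamma$ and $\chi$ is a bump function with $\chi(\tau)=1$. Integrating by parts separately on $[0,\tau]$ and $[\tau,r]$, using the Jacobi equation for $W$ on $[0,\tau]$ and $Z\equiv0$ on $[\tau,r]$ (the curvature terms cancel, and the boundary terms at $0$ and $r$ vanish since $V$ is supported away from the endpoints), one obtains $I(Z,V)=\langle D_tW(\tau),V(\tau)\rangle=|D_tW(\tau)|^2>0$. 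Therefore
$$
I(Z+sV,Z+sV)=I(Z,Z)+2s\,I(Z,V)+s^2I(V,V)=2s\,|D_tW(\tau)|^2+s^2I(V,V)<0
$$
for all sufficiently small $s<0$.

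Finally, $Z+sV$ is still only piecewise smooth (it inherits the corner of $Z$ at $\tau$), so I would smooth the corner: replace $Z+sV$ on a shrinking interval $[\tau-\tfrac1k,\tau+\tfrac1k]$ by a smooth interpolation, obtaining smooth fields $Y_k$ along $\bar\gamma$ with $Y_k(r)=0$. Since $Z+sV$ is continuous at $\tau$ and only its derivative jumps, $|D_tY_k|$ stays bounded and the contribution of $[\tau-\tfrac1k,\tau+\tfrac1k]$ to the index form is $O(1/k)$, so $I(Y_k,Y_k)\to I(Z+sV,Z+sV)<0$. For $k$ large this contradicts the previous lemma, so $D_tW(\tau)=0$; together with $W(\tau)=0$ and uniqueness for the Jacobi equation this gives $W\equiv0$. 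The only delicate point is this last smoothing estimate; the rest is direct computation with the Jacobi equation and the prescribed initial condition. (Alternatively, one may first note that the inequality of the previous lemma extends to piecewise smooth fields by the same smoothing, and then conclude directly that the minimizer $Z$ must be a genuine Jacobi field on all of $[0,r]$, hence smooth at $\tau$, forcing $D_tW(\tau)=0$.)
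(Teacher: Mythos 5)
Your proof is correct and follows essentially the same route as the paper, which refers to Lemma 2.4 in Brendle's paper for the argument: introduce the broken field $Z$ equal to $W$ on $[0,\tau]$ and $0$ on $[\tau,r]$, show via integration by parts and the Jacobi equation together with the prescribed initial condition that the index quantity vanishes on $Z$, and conclude that $Z$ is a minimizer forcing $D_tW(\tau)=0$, hence $W\equiv 0$. Your main text spells out the first-variation perturbation $Z+sV$ that underlies the ``minimizer is a Jacobi field'' step, and your parenthetical alternative at the end is precisely the phrasing Brendle uses; the computations of $I(Z,Z)=0$ and $I(Z,V)=|D_tW(\tau)|^2$ are both correct, and the piecewise-to-smooth approximation is the standard justification needed in either formulation.
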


Now, we give two comparison results for later use.


\begin{lem}\label{comparison}
	Let $G$ be a continuous function on $[0,+\infty)$ and let $\phi,\psi\in C^2([0,+\infty))$ be solutions of the following problems
	$$
	\left\{
	\begin{aligned}
		&\phi''\leq {G}\phi ,\quad t\in(0,+\infty),\\
		&\phi(0)=1,\phi'(0)=b,
	\end{aligned} \right.
	\quad
	\left\{
	\begin{aligned}
		&\psi''\geq {G}\psi, \quad t\in(0,+\infty),\\
		&\psi(0)=1,\psi'(0)=\tilde{b},
	\end{aligned} \right.
	$$
	where $b,\tilde{b}$ are constants and $\tilde{b}\geq b$. If $\phi(t)>0$ for $t\in(0,T)$, then $\psi(t)>0$ in $(0,T)$ and
	$$
	\frac{\phi'}{\phi}\leq\frac{\psi'}{\psi}\quad\text{and}\quad\psi\geq\phi
	\quad\text{on }(0,T). 
	$$
\end{lem}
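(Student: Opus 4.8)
The plan is to run a standard Sturm-type comparison through the generalized Wronskian
\[
W(t):=\phi'(t)\psi(t)-\phi(t)\psi'(t).
\]
First I would record the two elementary facts that drive everything. Since $\phi,\psi\in C^2$, $W$ is $C^1$ on $[0,+\infty)$ with $W(0)=\phi'(0)\psi(0)-\phi(0)\psi'(0)=b-\tilde b\le 0$. Moreover, on any interval where $\phi\ge 0$ and $\psi\ge 0$,
\[
W'=\phi''\psi-\phi\psi''\le G\phi\psi-G\phi\psi=0,
\]
where one multiplies $\phi''\le G\phi$ by $\psi\ge0$ and $\psi''\ge G\psi$ by $\phi\ge0$. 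Integrating, $W\le 0$ on every interval $[0,\tau]$ on which both $\phi$ and $\psi$ remain nonnegative.

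Next I would propagate positivity of $\psi$ by a maximal-interval argument. Put $t^\ast:=\sup\{\tau\in(0,T]:\psi>0\text{ on }(0,\tau)\}$, which is positive since $\psi(0)=1$. On $(0,t^\ast)$ we have $\psi>0$ and also $\phi>0$ because $t^\ast\le T$; hence $W\le 0$ there by the previous step. Consequently the ratio $h:=\psi/\phi$, which is continuous on $[0,t^\ast)$ with $h(0)=1$, satisfies $h'=-W/\phi^2\ge 0$ on $(0,t^\ast)$, so $h$ is nondecreasing and $h\ge 1$, i.e. $\psi\ge\phi>0$ on $[0,t^\ast)$. If we had $t^\ast<T$, maximality would force $\psi(t^\ast)=0$, yet continuity together with $\psi\ge\phi$ gives $\psi(t^\ast)\ge\phi(t^\ast)>0$ since $t^\ast<T$ — a contradiction. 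Therefore $t^\ast=T$, so $\psi>0$ on $(0,T)$, and the two conclusions now hold on all of $(0,T)$: the inequality $W\le 0$ is precisely $\phi'/\phi\le\psi'/\psi$, and $h\ge 1$ is precisely $\psi\ge\phi$.

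The one point I would be careful about — and the only mild obstacle — is exactly this passage from a local to a global statement: a priori $\psi$ could vanish somewhere in $(0,T)$, and the monotonicity $W'\le 0$ is available only while both functions stay nonnegative, so one cannot integrate blindly across $(0,T)$ from the outset. The maximal-interval device handles this cleanly, and the hypothesis $\phi,\psi\in C^2$ guarantees $W\in C^1$, so the identity $W(t)=W(0)+\int_0^t W'(s)\,ds$ is legitimate even though $\phi$ and $\psi$ only satisfy differential inequalities. Everything else is routine.
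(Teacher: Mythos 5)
Your proof is correct and follows essentially the same route as the paper's: both form the Wronskian-type quantity $\psi'\phi-\psi\phi'$ (you use its negative $W$), use its sign at $0$ from $\tilde b\ge b$ and its monotonicity from the differential inequalities while both functions are positive, and then close the argument via a maximal-interval continuation to rule out $\psi$ vanishing before $T$. The only cosmetic difference is that you verify $\psi\ge\phi$ by differentiating the ratio $\psi/\phi$ directly, whereas the paper integrates the inequality $\phi'/\phi\le\psi'/\psi$; these are the same computation.
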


\begin{proof}
	Set $\beta=\sup\{t:\psi(t)>0\text{ in }(0,t)\}$ and $\tau = \min\{\beta,T\}$, so that $\phi$ and $\psi$ are both positive in $(0,\tau)$. The function $\psi'\phi-\psi\phi'$ is continuous on $[0,+\infty)$, nonnegative at $t=0$, and satisfies
	$$
	(\psi'\phi-\psi\phi')'=\psi''\phi-\psi\phi''\geq {G}(t)\psi\phi-
	{G}(t)\psi\phi=0,
	$$
	in $(0,\tau)$. Thus $\psi'\phi-\psi\phi'\geq 0$ on $[0,\tau)$, which implies
	\begin{equation}\label{2.3}
	\frac{\psi'}{\psi}\geq\frac{\phi'}{\phi}\quad\text{ in }[0,\tau).
	\end{equation}
	Integrating \eqref{2.3} between $0$ and $t$ $(0<t<\tau)$ yields
	$$
	\phi(t)\leq \psi(t),\quad\text{ in }[0,\tau).
	$$
	Since $\phi>0\text{ in }[0,\tau)$ by assumption, this forces $\tau=T$.
\end{proof}

\begin{lem}\label{h12}
	Let $G$ be a nonnegative continuous function on $[0,+\infty)$ satisfying \\
	$\int_0^{+\infty}G\ dt<+\infty$. Let $h_1,h_2\in C^2([0,+\infty))$ be solutions of the following problems
	\begin{equation}\label{2.4}
	\left\{
	\begin{aligned}
		&h_1''= {G}h_1 ,\quad t\in(0,+\infty),\\
		&h_1(0)=0,h_1'(0)=1,
	\end{aligned} \right.
	\quad
	\left\{
	\begin{aligned}
		&h_2''= {G}h_2, \quad t\in(0,+\infty),\\
		&h_2(0)=1,h_2'(0)=0.
	\end{aligned} \right.
	\end{equation}
	Then we have
	$$
	\lim_{t\to\infty}\frac{h_2}{h_1}=\lim_{t\to\infty}\frac{h'_2}{h'_1}
	\leq \int_0^{+\infty}G\ dt<\infty.
	$$
\end{lem}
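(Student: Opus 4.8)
The plan is to reduce everything to the Wronskian of $h_1$ and $h_2$ together with elementary convexity estimates, so that no delicate ODE analysis is needed.

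First I would record the basic sign and growth properties. Since $G\ge 0$, every solution of $y''=Gy$ is convex on each interval where it stays positive. As $h_1(0)=0$ and $h_1'(0)=1$, the function $h_1$ is positive just past $0$; were it to have a first zero $t_0>0$, it would be convex on $[0,t_0]$ with $h_1(0)=h_1(t_0)=0$, forcing $h_1\le 0$ there, a contradiction. Hence $h_1>0$ on $(0,\infty)$, so $h_1$ is globally convex there, $h_1'\ge h_1'(0)=1$, and $h_1(t)\ge t\to\infty$. The same reasoning shows $h_2\ge 1$ and $h_2'\ge 0$ on $[0,\infty)$.

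Next I would compute the Wronskian $W:=h_2'h_1-h_2h_1'$. Using $h_i''=Gh_i$ one gets $W'=h_2''h_1-h_2h_1''=0$, and $W(0)=-1$, so $W\equiv-1$; equivalently $h_2h_1'-h_2'h_1\equiv 1$. This identity does all the work: it gives $(h_2/h_1)'=W/h_1^2=-h_1^{-2}<0$, so $h_2/h_1$ decreases to a finite limit $L_1\ge 0$, and it gives $(h_2'/h_1')'=G(h_2h_1'-h_2'h_1)/(h_1')^2=G/(h_1')^2\le G$ (here I use $h_1'\ge 1$). Since $h_2'(0)/h_1'(0)=0$, integrating the last inequality yields $h_2'(t)/h_1'(t)=\int_0^t G(s)\,h_1'(s)^{-2}\,ds\le\int_0^\infty G\,ds<\infty$, so $h_2'/h_1'$ increases to a limit $L_2$ with $L_2\le\int_0^\infty G\,ds$.

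It remains to show $L_1=L_2$. From $h_2'/h_1'\le L_2$ (monotone increase to $L_2$) I would integrate to get $h_2(t)-1\le L_2 h_1(t)$, hence $\limsup_{t\to\infty} h_2/h_1\le L_2$; and from $h_2'/h_1'\ge L_2-\epsilon$ on $[T,\infty)$ I would integrate from $T$ to $t$ and divide by $h_1(t)\to\infty$ to get $\liminf_{t\to\infty} h_2/h_1\ge L_2-\epsilon$, so $L_1=L_2$. I expect the only point needing a little care to be this last step --- an $\infty/\infty$ L'Hôpital-type comparison --- but the sign condition $G\ge 0$ makes $h_2'/h_1'$ monotone, so the inequality $h_2'\le L_2 h_1'$ holds for all $t$ and the step collapses to a single integration rather than a genuine limit interchange.
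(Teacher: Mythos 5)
Your proof is correct, and the core of the argument — constancy of the Wronskian, monotone decrease of $h_2/h_1$, monotone increase of $h_2'/h_1'$ with $(h_2'/h_1')'=G/(h_1')^2\le G$ — is the same as the paper's. The one place you diverge is the final step proving $\lim h_2/h_1=\lim h_2'/h_1'$. The paper gets this by one algebraic rearrangement of the Wronskian identity, $h_2/h_1 = h_2'/h_1' + 1/(h_1h_1')$, and then just lets $t\to\infty$ using $h_1h_1'\to\infty$; you instead run a Ces\`aro/L'H\^opital-style comparison, integrating $h_2'/h_1'\le L_2$ and $h_2'/h_1'\ge L_2-\varepsilon$ on $[T,\infty)$ and dividing by $h_1(t)\to\infty$. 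Your version is slightly longer but more robust (it does not rely on the special algebraic form of the Wronskian, only on monotonicity of $h_2'/h_1'$ and unboundedness of $h_1$); the paper's version is shorter and bypasses the $\varepsilon$--$T$ bookkeeping entirely. Another small difference: the paper quotes Lemma 2.13 of \cite{PRS} for $h_1(t)\ge t$, while you derive $h_1'\ge 1$ and $h_1\ge t$ from scratch via convexity, which makes the argument self-contained. Both proofs are valid.
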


\begin{proof}
	From \eqref{2.4}, we derive
	$$
	(h_2h_1'-h_1h_2')'(t)\equiv0,
	$$
	and thus
	\begin{equation}\label{hh1}
		(h_2h_1'-h_1h_2')(t)\equiv1
	\end{equation}
	in view of the initial values for $h_1$ and $h_2$. By derivation, one can find
	$$
	\Big(\frac{h_2}{h_1}\Big)'=\frac{h_2'h_1-h_1'h_2}{h_1^2}=\frac{-1}{h_1^2}<0,
	$$
	which implies that $\lim_{t\to+\infty}\frac{h_2(t)}{h_1(t)}$ exists. It is easy to show that 
	$$
	0\leq\Big(\frac{h'_2}{h'_1}\Big)'=
	\frac{G(h_2h_1'-h_1h_2')}{(h'_1)^2}
	\leq\frac{G}{(1+\int_0^tsG(s)ds)^2}
	\leq G,
	$$
	so we get 
	$$
	\frac{h'_2(t)}{h'_1(t)}\leq\int_0^{+\infty}G\ dt.
	$$ 
	 By Lemma 2.13 in \cite{PRS}, we have $h_1(t)\geq t$. Consequently, using \eqref{hh1} and $h'_1=1+\int_0^tGh_1 ds$, we obtain
	\begin{equation}\label{h123}
		\frac{h_2}{h_1}=\frac{h'_2}{h'_1}+\frac{1}{h_1h'_1}\leq\int_0^{+\infty}G\ dt+\frac{1}{t},\quad t\in(0,\infty).
	\end{equation}
	Letting $t\to\infty$, we have
	$$
	\begin{aligned}
		\lim_{t\to\infty}\frac{h_2}{h_1}=\lim_{t\to\infty}\frac{h'_2}{h'_1}
		\leq\int_0^{+\infty}G\ dt.
	\end{aligned}
	$$
\end{proof}

The next result is useful to study the growth of various balls on $M$ when their radii approach to infinity.

\begin{lem}\label{poly-degree-1}
	Let $h$ be the solution of \eqref{h}. Then
	$$
	\lim_{t\to+\infty}\frac{h(t-C)}{h(t)}=1 \text{ and }
	\lim_{t\to+\infty}\frac{h(tC)}{h(t)}=C,
	$$
	where $C$ is any positive constant.
\end{lem}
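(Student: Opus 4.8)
The plan is to reduce both statements to a single fact: the solution $h$ of \eqref{h} grows asymptotically linearly, i.e. $\lim_{t\to+\infty} h(t)/t=\ell$ for some finite constant $\ell>0$. Granting this, both limits are immediate. For $t>C$ write
$$
\frac{h(t-C)}{h(t)}=\frac{h(t-C)/(t-C)}{h(t)/t}\cdot\frac{t-C}{t},
\qquad
\frac{h(tC)}{h(t)}=\frac{h(tC)/(tC)}{h(t)/t}\cdot\frac{tC}{t};
$$
letting $t\to+\infty$, the first factors tend to $\ell/\ell=1$ (this is where $\ell>0$ is used) while the last factors tend to $1$ and $C$, which gives the asserted values $1$ and $C$.

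To prove the linear growth, I would first record that $h(t)\geq t$ for all $t\geq0$, by Lemma 2.13 in \cite{PRS} (already used in the proof of Lemma \ref{h12}); in particular $h\geq0$, so $h''=\lambda h\geq0$ and $h'$ is nondecreasing with $h'\geq h'(0)=1$. Monotonicity of $h'$ gives $h(t)=\int_0^t h'(s)\,ds\leq t\,h'(t)$, and inserting this into the integrated form $h'(t)=1+\int_0^t\lambda(s)h(s)\,ds$ of \eqref{h} yields
$$
h'(t)\leq 1+\int_0^t s\,\lambda(s)\,h'(s)\,ds .
$$
Gronwall's inequality together with \eqref{b0} then gives $h'(t)\leq\exp\!\big(\int_0^t s\lambda(s)\,ds\big)\leq e^{b_0}$ for every $t$. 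Hence $h'$ is nondecreasing and bounded, so $\ell:=\lim_{t\to+\infty}h'(t)$ exists with $1\leq\ell\leq e^{b_0}$. Since $h(t)\to+\infty$, l'H\^opital's rule gives $\lim_{t\to+\infty}h(t)/t=\lim_{t\to+\infty}h'(t)=\ell>0$, which is exactly the linear growth required.

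The one genuinely substantive point is the uniform bound $h'\leq e^{b_0}$: this is precisely where the finiteness hypothesis $b_0<+\infty$ in \eqref{b0} is used, and it relies on first securing the positivity and convexity of $h$ so that the Gronwall estimate closes. Everything else is elementary manipulation of limits, so I anticipate no further obstacle.
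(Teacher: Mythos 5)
Your proof is correct. Both you and the paper reduce the statement to the observation that $h'$ is nondecreasing and bounded above, hence convergent to some finite $\ell\geq 1$; the paper then applies L'H\^opital directly to $h(t-C)/h(t)$ and $h(tC)/h(t)$, whereas you first establish the cleaner intermediate fact $h(t)/t\to\ell>0$ and then conclude by elementary factoring — a cosmetic difference. The more noticeable divergence is in how the bound on $h'$ is obtained: the paper quotes the two-sided estimate $t\leq h(t)\leq e^{b_0}t$ from Lemma~2.13 of \cite{PRS} and substitutes the upper bound directly into $h'(t)=1+\int_0^t\lambda h$ to get $h'\leq 1+b_0e^{b_0}$, whereas you use only $h\geq t$ (to get nonnegativity and hence convexity), derive $h(t)\leq t\,h'(t)$ from monotonicity of $h'$, and close the estimate $h'(t)\leq 1+\int_0^t s\lambda(s)h'(s)\,ds$ via Gronwall, yielding the slightly sharper bound $h'\leq e^{b_0}$. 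Your route is more self-contained (it only needs $h\geq t$, which itself follows elementarily from $h'\geq h'(0)=1$) and produces a better constant, while the paper's version is shorter because it leans on a known comparison lemma. Either way, the essential mechanism — monotone bounded $h'$ plus L'H\^opital — is the same, and your argument has no gaps.
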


\begin{proof}
	From Lemma 2.13 in \cite{PRS}, we know $t\leq h(t)\leq e^{b_0}t$, and thus
	\begin{equation}\label{hp-leq}
		h'(t)=1+\int_0^t \lambda h\ dt\leq 1+b_0e^{b_0}.
	\end{equation}
	Clearly \eqref{hp-leq} means that $h'$ is nondecreasing and bounded from above. Consequently we have
	$$
	\lim_{t\to+\infty}\frac{h(t-C)}{h(t)}=
	\lim_{t\to+\infty}\frac{h'(t-C)}{h'(t)}=1
	$$
	and
	$$
	\lim_{t\to+\infty}\frac{h(tC)}{h(t)}=\lim_{t\to+\infty}\frac{Ch'(tC)}{h'(t)}=C.
	$$
	
\end{proof}

We are now turning to the proof of Theorem 1.1. 

\begin{proof}[\textbf{Proof of Theorem 1.1}]
	For any $r>0$ and $\bar{x}\in A_r$, let $\{e_1,\dots,e_n\}$ be an orthonormal basis of the tangent space $T_{\bar{x}}M$. Choosing the geodesic normal coordinates $(x^1,\dots,x^n)$ around $\bar{x}$, such that $\frac{\partial}{\partial x^i}=e_i$ at $\bar{x}$. Let $\bar{\gamma}(t):=\exp_{\bar{x}}(tD u(\bar{x}))$ for all $t\in[0,r]$. For $1\leq i\leq n$, let $E_i(t)$ be the parallel transport of $e_i$ along $\bar{\gamma}$. For $1\leq i\leq n$, let $X_i(t)$ be the Jacobi field along $\bar{\gamma}$ with the initial conditions of $X_i(0)=e_i$ and 
	$$
	\langle D_tX_i(0),e_j\rangle=(D^2u)(e_i,e_j),\quad1\leq j\leq n.
	$$
	Let $P(t)=(P_{ij}(t))$ be a matrix defined by
	$$
	P_{ij}(t)=\langle X_i(t),E_j(t)\rangle, \quad
	1\leq i,j\leq n.
	$$
	From Lemma \ref{jacovani}, we known $\det P(t)>0,\forall t\in[0,r)$. Obviously, $|\det D\Phi_t(\bar{x})|=\det P(t)>0$ for $t\in[0,r)$. Let $S(t)=(S_{ij}(t))$ be a matrix defined by
	$$
	S_{ij}(t)=R(\bar{\gamma}'(t),E_i(t),\bar{\gamma}'(t),E_j(t)),
	\quad
	1\leq i,j\leq n,
	$$
	where $R$ denotes the Riemannian curvature tensor of $M$. By the Jacobi equation, one can obtain
	\begin{equation}\label{jacobi}
		\left\{
		\begin{aligned}
			&P''(t)=-P(t)S(t),\quad t\in[0,r],\\
			&P_{ij}(0)=\delta_{ij},P_{ij}'(0)=(D^2u)(e_i,e_j).
		\end{aligned} \right.
	\end{equation}
	Let $Q(t)=P(t)^{-1}P'(t),t\in(0,r)$. Using \eqref{jacobi}, a simple computation  yields
	$$
	\frac{d}{dt}Q(t)=-S(t)-Q^2(t),
	$$
	where $Q(t)$ is symmetric. The assumption of asymptotically nonnegative Ricci curvature gives
	\begin{equation}\label{tr1}
		\begin{aligned}
			\frac{d}{dt}[\mathrm{tr} Q(t)]+\frac{1}{n}[\mathrm{tr}Q(t)]^2&\leq
			\frac{d}{dt}[\mathrm{tr} Q(t)]+\mathrm{tr}[Q^2(t)]\\
			&=-\mathrm{tr}S(t)\\
			&\leq (n-1)|D u(\bar{x})|^2\lambda(d(o,\bar{\gamma}(t))),
		\end{aligned}
	\end{equation}
	where $o$ is the base point. 
	Using triangle inequality and the definition of $A_r$, it is easy to see that
	\begin{equation}\label{tri}
		d(o,\bar{\gamma}(t))\geq\big|d(o,\bar{x})-d(\bar{x},\bar{\gamma}(t))\big|
		=\big|d(o,\bar{x})-t|Du(\bar{x})|\big|.
	\end{equation}
	Set 
	\begin{equation*}
		\begin{aligned}
			&g=\frac{1}{n}\mathrm{tr}Q,\\
			&\Lambda_{\bar{x}}(t)=\frac{(n-1)}{n}
			|D u(\bar{x})|^2
			\lambda(\big|d(o,\bar{x})-t|D u(\bar{x})|\big|).
		\end{aligned}
	\end{equation*}
	Noting that $\lambda$ is nonincreasing, it follows from \eqref{jacobi}, \eqref{tr1}, \eqref{tri} that
	$$
	\left\{
	\begin{aligned}
		&g'(t)+g(t)^2\leq \Lambda_{\bar{x}}(t), \quad t\in(0,r),\\
		&g(0)=\frac{1}{n}\Delta u(\bar{x}).
	\end{aligned} \right.
	$$
	If we take $\phi=e^{\int_0^tg(\tau)d\tau}$, then $\phi$ satisfies
	\begin{equation}\label{jac1}
		\left\{
		\begin{aligned}
			&\phi''\leq \Lambda_{\bar{x}}(t)\phi, \quad t\in(0,r),\\
			&\phi(0)=1,\phi'(0)=\frac{1}{n}\Delta u(\bar{x}).
		\end{aligned} \right.
	\end{equation}
	Next, we denote by $\psi_1,\psi_2$ the solutions of the following problems
	\begin{equation}\label{jac2}
		\left\{
		\begin{aligned}
			&\psi_1''= \Lambda_{\bar{x}}(t)\psi_1 ,\quad t\in(0,r),\\
			&\psi_1(0)=0,\psi_1'(0)=1,
		\end{aligned} \right.
		\quad
		\left\{
		\begin{aligned}
			&\psi_2''= \Lambda_{\bar{x}}(t)\psi_2 ,\quad t\in(0,r),\\
			&\psi_2(0)=1,\psi_2'(0)=0.
		\end{aligned} \right.
	\end{equation}
	Similar to the proof of \eqref{h123}, it is easy to verify that
	$$
	\frac{\psi_2}{\psi_1}(r)\leq \int_0^{+\infty}\Lambda_{\bar{x}}(t)\ dt+\frac{1}{r}
	\leq2\Big(\frac{n-1}{n}\Big)b_1|Du(\bar{x})|+\frac{1}{r}.
	$$
	Since $|Du(\bar{x})|<1$, we obtain
	\begin{equation}\label{d1}	
	\frac{\psi_2}{\psi_1}(r)\leq2\Big(\frac{n-1}{n}\Big)b_1+\frac{1}{r}.
	\end{equation}
Using Lemma 2.13 in \cite{PRS} and \eqref{jac2}, we deduce that
	\begin{equation}\label{comall}
		\begin{aligned}
			\psi_1(t)&\leq 
			\int_0^te^{\int_0^s \tau\Lambda_{\bar{x}}(\tau)d\tau}ds
			\\
			&\leq te^{\int_0^\infty \tau\Lambda_{\bar{x}}(\tau)d\tau}
			\\
			&=te^{\frac{n-1}{n}\int_0^\infty w\lambda(|d(o,\bar{x})-w|)dw}
			\\
			&\leq te^{\frac{n-1}{n}(2r_0b_1+b_0)},
		\end{aligned}
	\end{equation}
	where $r_0=\max\{d(o,x)|x\in\Omega\}$. 
	
	Let $\psi(t)=\psi_2(t)+\frac{1}{n}\Delta u(\bar{x})\psi_1(t)$. Using Lemma \ref{comparison}, one can get
	$$
	\frac{1}{n}\mathrm{tr}Q(t)=\frac{\phi'}{\phi}\leq\frac{\psi'}{\psi},\quad\forall t\in(0,r).
	$$
	Thus,
	\begin{equation}\label{d2}
		\frac{d}{dt}\log\det P(t)=\mathrm{tr}Q(t)\leq n\frac{\psi'}{\psi}.
	\end{equation}
	Consequently, \eqref{d2} implies
	$$
	|\det D\Phi_t(\bar{x})|=\det P(t)\leq \psi^n(t)=(\psi_2(t)+\frac{1}{n}\Delta u(\bar{x})\psi_1(t))^n
	$$
	for all $t\in[0,r]$. This gives
	\begin{align*}
		|\det D\Phi_r(\bar{x})|&\leq\Big(\frac{\psi_2(r)}{\psi_1(r)}+\frac{1}{n}\Delta u(\bar{x})\Big)^n\psi^n_1(r)
	\end{align*}
	for any $\bar{x}\in A_r$. Note that $0\leq\phi\leq\psi$. Using \eqref{d1}, \eqref{comall} and Lemma \ref{hessu}, we derive that
	\begin{equation}\label{d3}
		\begin{aligned}
			|\det D\Phi_r(\bar{x})|&\leq e^{(n-1)(2r_0b_1+b_0)}\Big(2\big(\frac{n-1}{n}\big)b_1+\frac{1}{r}+\frac{1}{n}\Delta u(\bar{x})\Big)^n
			r^n\\
			&\leq e^{(n-1)(2r_0b_1+b_0)}\Big(\frac{1}{r}+f^{\frac{1}{n-1}}(\bar{x})\Big)^n
			r^n
		\end{aligned}
	\end{equation}
	for any $\bar{x}\in A_r$. Moreover, by \eqref{h}, we obtain $h(t)\geq t$ and
	\begin{equation}\label{h1234}
		\lim_{t\to\infty}h'(t)=1+\int_0^\infty h(s)\lambda(s)\ ds\geq1+\int_0^\infty s\lambda(s)\ ds
		=1+b_0.
	\end{equation}
	Combining Lemma \ref{tran}, \eqref{d3} with the formula for change of variables in multiple integrals, we find that
	\begin{equation}\label{d4}
		\begin{aligned}
			&|\{q\in M:d(x,q)<r\text{ for all }x\in\Omega \}|\\
			&\leq\int_{A_r}|\det D\Phi_r|\\
			&\leq\int_{\Omega}e^{(n-1)(2r_0b_1+b_0)}(\frac{1}{r}+f^{\frac{1}{n-1}})^n
			r^n.
		\end{aligned}
	\end{equation}
	For $r>r_0$, the triangle inequality implies that
	\begin{equation}\label{inclusion}
		B_{r-r_0}(o)\subset\{q\in M:d(x,q)<r\text{ for all }x\in\Omega \}
		\subset B_{r+r_0}(o).
	\end{equation}
	From \eqref{avr}, \eqref{inclusion} and Lemma \ref{poly-degree-1}, it is easy to show that 
	\begin{equation}\label{avr-f-d}
		\begin{aligned}
			|B^n|\theta&=\lim_{r\to+\infty}\frac{B_{r-r_0}(o)}{n\int_0^{r-r_0}h(t)^{n-1}dt}
			\frac{\int_0^{r-r_0}h(t)^{n-1}dt}{\int_0^{r}h(t)^{n-1}dt}
			\\
			&\leq\lim_{r\to+\infty}\frac{|\{q\in M:d(x,q)<r\text{ for all }x\in\Omega \}|}
			{n\int_0^rh(t)^{n-1}dt}\\
			&\leq	\lim_{r\to+\infty}\frac{B_{r+r_0}(o)}{n\int_0^{r+r_0}h(t)^{n-1}dt}
			\frac{\int_0^{r+r_0}h(t)^{n-1}dt}{\int_0^{r}h(t)^{n-1}dt}	\\
			&=|B^n|\theta.
		\end{aligned}
	\end{equation}
	Dividing \eqref{d4} by $n\int_0^rh(t)^{n-1}dt$ and sending $r\to\infty$, it follows from \eqref{h1234} and \eqref{avr-f-d} that
	$$
	\begin{aligned}
		|B^n|\theta&\leq e^{(n-1)(2r_0b_1+b_0)}\int_\Omega f^{\frac{n}{n-1}}
		\lim_{r\to\infty}\frac{r^n}{n\int_0^rh(t)^{n-1}dt}
		\\
		&=e^{(n-1)(2r_0b_1+b_0)}\int_\Omega f^{\frac{n}{n-1}}\lim_{r\to\infty}\frac{1}{h'(t)^{n-1}}
		\\
		&\leq\Big(\frac{e^{2r_0b_1+b_0}}{1+b_0}\Big)^{n-1}\int_\Omega
		f^{\frac{n}{n-1}}.
	\end{aligned}
	$$
	Hence we obtain
	$$
	\int_{\partial \Omega} f+\int_\Omega |D f|+2(n-1)b_1\int_\Omega f
	\geq n|B^n|^{\frac 1n}
	\theta^{\frac1n}
	\Big(\frac{1+b_0}{e^{2r_0b_1+b_0}}\Big)^\frac{n-1}{n}
	\Big(\int_\Omega f^\frac{n}{n-1}\Big)^\frac{n-1}{n}.
	$$
\end{proof}

\begin{proof}[\textbf{Proof of Theorem 1.2}]
	Suppose the equality of Theorem \ref{thm1.1} holds. Then we have equalities in \eqref{d1} and \eqref{h1234} which force $\lambda\equiv0$. Thus $M$ has nonnegative Ricci curvature. The assertion follows immediately from Theorem 1.2 in \cite{Br2}.	
\end{proof}

\section{The case of submanifolds}	
In this section, we assume that the ambient space $M$ is a complete noncompact $(n+p)$-dimensional Riemannian manifold of asymptotically nonnegative sectional curvature with respect to a base point $o\in M$.  Let $\Sigma\subset M$ be a compact submanifold of dimension $n$ with or without boundary, and $f$ be a positive smooth function on $\Sigma$. Let $\bar{D}$ denote the Levi-Civita connection of $M$ and let $D^\Sigma$ denote the induced connection on $\Sigma$. The second fundamental form $B$ of $\Sigma$ is given by
$$
\langle B(X,Y),V\rangle=\langle \bar{D}_XY,V\rangle,
$$
where $X,Y$ are the tangent vector fields on $\Sigma$, $V$ is a normal vector field along $\Sigma$. The mean curvature vector of $\Sigma$ is defined by $H=\mathrm{tr}B$.

We only need to treat the case that $\Sigma$ is connected. By scaling, we can assume that 
\begin{equation}\label{scale2}
	\int_{\partial\Sigma}f+\int_\Sigma\sqrt{|D^\Sigma f|^2+f^2|H|^2}
	+2nb_1\int_\Sigma f=n\int_\Sigma f^{\frac{n}{n-1}}.
\end{equation}
By the connectedness of $\Sigma$ and \eqref{scale2}, there exists a solution of the following Neumann boundary problem
\begin{equation}\label{neus}
	\left\{
	\begin{aligned}
		&\text{div}_\Sigma(fD^\Sigma u)=nf^\frac{n}{n-1}-2nb_1f-\sqrt{|D^\Sigma f|^2+f^2|H|^2},& \text{ in }\Sigma,\\
		&\langle D^\Sigma u,\nu\rangle=1,
		& \text{ on }\partial\Sigma,
	\end{aligned}
	\right.
\end{equation}
where $\nu$ is the outward unit normal vector field of $\partial\Sigma$ with respect to $\Sigma$. Note that if $\partial\Sigma=\varnothing$, then the boundary condition in \eqref{neus} is void. By standard elliptic regularity theory (see Theorem 6.31 in \cite{GT}), we know that $u\in C^{2,\gamma}$ for each $0<\gamma<1$.

As in \cite{Br2}, we define
\begin{equation*}
	\begin{aligned}
		U:&=\{x\in\Sigma\setminus\partial\Sigma:|D^\Sigma u(x)|<1\},\\
		E:&=\{(x,y):x\in U,y\in T^\perp_x\Sigma,|D^\Sigma u(x)|^2+|y|^2<1\}.
	\end{aligned}
\end{equation*}
For each $r>0$, we denote by $A_r$ the set of all points $(\bar{x},\bar{y})\in E$ satisfying
$$
ru(x)+\frac{1}{2}{d}
(x,\exp_{\bar{x}}(rD^\Sigma u(\bar{x}))+r\bar{y})^2\geq
ru(\bar{x})+\frac{1}{2}r^2(|D^\Sigma u(\bar{x})|^2+|\bar{y}|^2)
$$
for all $x\in\Sigma$. Define the transport map $\Phi_r:T^\perp\Sigma\to M$ for each $r>0$ by
$$
\Phi_r(x,y)=\exp_x(rD^\Sigma u(x)+ry)
$$
for all $x\in\Sigma$ and $y\in T^\perp_x\Sigma$. The regularity of $u$ implies that $\Phi_r$ is of class $C^{1,\gamma}$, $0<\gamma<1$.

\begin{lem}\label{hess-sub}
	Assume that $(x,y)\in E$. Then we have
	$$
	\frac{1}{n}(\Delta_\Sigma u(x)-\langle H(x),y\rangle)\leq f^\frac{1}{n-1}(x)-2b_1.
	$$
\end{lem}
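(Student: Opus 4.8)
The plan is to mirror the proof of Lemma \ref{hessu}, the only genuinely new point being that the gradient-type term on the right-hand side of \eqref{neus} couples the tangential derivative $D^\Sigma f$ with the normal field $fH$, so the relevant Cauchy--Schwarz estimate has to be carried out in the orthogonal direct sum $T_x\Sigma\oplus T^\perp_x\Sigma$ rather than in $T_x\Sigma$ alone.

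First I would expand the divergence in \eqref{neus} as $\mathrm{div}_\Sigma(fD^\Sigma u)=f\Delta_\Sigma u+\langle D^\Sigma f,D^\Sigma u\rangle$, which gives
$$
f\big(\Delta_\Sigma u(x)-\langle H(x),y\rangle\big)=nf^{\frac{n}{n-1}}-2nb_1f-\sqrt{|D^\Sigma f|^2+f^2|H|^2}-\langle D^\Sigma f,D^\Sigma u\rangle-f\langle H,y\rangle .
$$
Hence it suffices to prove that the last three terms add up to something nonpositive, that is,
$$
-\langle D^\Sigma f,D^\Sigma u\rangle-\langle fH,y\rangle\leq\sqrt{|D^\Sigma f|^2+f^2|H|^2}.
$$

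This is precisely the Cauchy--Schwarz inequality applied to the pair of vectors $(D^\Sigma f,fH)$ and $(D^\Sigma u,y)$ in $T_x\Sigma\oplus T^\perp_x\Sigma$, namely
$$
\big|\langle D^\Sigma f,D^\Sigma u\rangle+\langle fH,y\rangle\big|\leq\sqrt{|D^\Sigma f|^2+f^2|H|^2}\cdot\sqrt{|D^\Sigma u|^2+|y|^2},
$$
combined with the fact that $(x,y)\in E$ forces $|D^\Sigma u(x)|^2+|y|^2<1$, so the second factor is at most $1$. Plugging this back into the displayed identity yields $f\big(\Delta_\Sigma u(x)-\langle H(x),y\rangle\big)\leq nf^{\frac{n}{n-1}}-2nb_1f$, and dividing by $nf>0$ gives the asserted inequality.

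I do not expect any real obstacle: the statement follows in essentially one step from the defining equation \eqref{neus}. The only thing requiring a little care is to group the tangential term $\langle D^\Sigma f,D^\Sigma u\rangle$ and the normal term $\langle fH,y\rangle$ together \emph{before} invoking Cauchy--Schwarz; estimating them separately would waste the constraint $|D^\Sigma u|^2+|y|^2<1$ and destroy the sharp constant that is needed later in the argument.
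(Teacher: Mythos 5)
Your proof is correct and follows essentially the same route as the paper: expand $\mathrm{div}_\Sigma(fD^\Sigma u)$, subtract $f\langle H,y\rangle$ from both sides using \eqref{neus}, and then apply Cauchy--Schwarz in the orthogonal sum $T_x\Sigma\oplus T^\perp_x\Sigma$ together with the constraint $|D^\Sigma u(x)|^2+|y|^2<1$ to absorb the mixed terms. Your remark about grouping the tangential and normal contributions before invoking Cauchy--Schwarz, rather than estimating them separately, is exactly the point of the paper's displayed estimate \eqref{hessu-sub-1}.
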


\begin{proof}
	Combining $|D^\Sigma u(x)|^2+|y|^2<1$ with Cauchy-Schwarz inequality, we obtain
	\begin{equation}\label{hessu-sub-1}
		\begin{aligned}
			&-\langle D^\Sigma f(x),D^\Sigma u(x)\rangle-f(x)\langle H(x),y\rangle\\
			&\leq\sqrt{|D^\Sigma f(x)|^2+f(x)^2|H(x)|^2}\sqrt{|D^\Sigma u(x)|^2+|y|^2}\\
			&\leq\sqrt{|D^\Sigma f(x)|^2+f(x)^2|H(x)|^2}.
		\end{aligned}
	\end{equation}
	In terms of \eqref{neus} and \eqref{hessu-sub-1}, one derives that
	$$
	\begin{aligned}
		&f(x)\Delta_\Sigma u(x)-f(x)\langle H(x),y\rangle\\
		&=nf(x)^{\frac{n}{n-1}}-2nb_1f-\sqrt{|D^\Sigma f(x)|^2+f(x)^2|H(x)|^2}\\
		&-\langle D^\Sigma f(x),D^\Sigma u(x)\rangle-f(x)\langle H(x),y\rangle\\
		&\leq nf(x)^{\frac{n}{n-1}}-2nb_1f.
	\end{aligned}
	$$
	The proof is completed.
\end{proof}

The following three lemmas are due to Brendle (Lemmas 4.2, 4.3, 4.5 in \cite{Br2}). Their proofs are independent of the curvature condition of ambient space too.

\begin{lem}\label{tran-sub}
	For each $0\leq\sigma<1$, the set
	$$
	\{q\in M:\sigma r<d(x,q)<r,\ \forall x\in\Sigma\}
	$$
	is contained in the set
	$$
	\Phi_r(\{(x,y)\in A_r:|D^\Sigma u(x)|^2+|y|^2>\sigma^2\}).
	$$
\end{lem}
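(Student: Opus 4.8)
\textbf{Proof proposal for Lemma~\ref{tran-sub}.}
The plan is to mimic the proof of Lemma~\ref{tran} in the domain case, but now accounting for the normal directions $y \in T^\perp_x\Sigma$ and for the extra parameter $\sigma$. Fix $r > 0$ and let $q \in M$ satisfy $\sigma r < d(x,q) < r$ for every $x \in \Sigma$. I want to produce a point $(\bar x, \bar y) \in A_r$ with $\Phi_r(\bar x, \bar y) = q$ and $|D^\Sigma u(\bar x)|^2 + |\bar y|^2 > \sigma^2$.

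First I would introduce the function on $\Sigma$
\[
x \longmapsto r\,u(x) + \tfrac{1}{2}\,d(x,q)^2 ,
\]
which is continuous on the compact set $\Sigma$, hence attains its minimum at some point $\bar x \in \Sigma$. The key first step is to check that $\bar x$ is an interior point: along $\partial\Sigma$ the Neumann condition $\langle D^\Sigma u, \nu\rangle = 1$ together with the bound $d(x,q) < r$ (so that the inward derivative of $\tfrac12 d(\cdot,q)^2$ is strictly greater than $-r$ in the $\nu$-direction, by the standard first-variation estimate $|D^\Sigma \tfrac12 d(\cdot,q)^2| \le d(\cdot,q) < r$) forces the normal derivative of $r u + \tfrac12 d(\cdot,q)^2$ to be strictly positive at boundary points, ruling out a boundary minimum. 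At the interior minimum $\bar x$, the tangential first-order condition gives $r\,D^\Sigma u(\bar x) + \tfrac12 D^\Sigma\big(d(\cdot,q)^2\big)(\bar x) = 0$. Since $\bar x$ is not a cut point of $q$ (if it were, a standard argument shows one can strictly decrease the function), $d(\cdot,q)^2$ is smooth near $\bar x$ and $\tfrac12 D^\Sigma(d(\cdot,q)^2)(\bar x) = -(\exp_{\bar x}^{-1} q)^\top$, the tangential part of the initial velocity of the minimizing geodesic from $\bar x$ to $q$ (suitably normalized). Writing $\exp_{\bar x}^{-1} q = r D^\Sigma u(\bar x) + r \bar y$ with $\bar y := \tfrac{1}{r}(\exp_{\bar x}^{-1}q)^\perp \in T^\perp_{\bar x}\Sigma$ then gives $\Phi_r(\bar x,\bar y) = \exp_{\bar x}(r D^\Sigma u(\bar x) + r\bar y) = q$ by construction.

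Next I would verify the three remaining memberships. For $|D^\Sigma u(\bar x)|^2 + |\bar y|^2$: since $\exp_{\bar x}^{-1} q = r(D^\Sigma u(\bar x) + \bar y)$ and the two summands are orthogonal, $r^2(|D^\Sigma u(\bar x)|^2 + |\bar y|^2) = |\exp_{\bar x}^{-1}q|^2 = d(\bar x, q)^2$; the hypothesis $\sigma r < d(\bar x,q) < r$ then yields simultaneously $|D^\Sigma u(\bar x)|^2 + |\bar y|^2 > \sigma^2$ (the lower bound) and $|D^\Sigma u(\bar x)|^2 + |\bar y|^2 < 1$; combined with $|D^\Sigma u(\bar x)| < 1$ (true once $|D^\Sigma u(\bar x)|^2 + |\bar y|^2 < 1$) this shows $\bar x \in U$ and hence $(\bar x,\bar y) \in E$. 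Finally, $(\bar x,\bar y) \in A_r$ is exactly the statement that $\bar x$ minimizes $x \mapsto r u(x) + \tfrac12 d(x, \exp_{\bar x}(r D^\Sigma u(\bar x) + r\bar y))^2 = r u(x) + \tfrac12 d(x,q)^2$ over $\Sigma$, which is how $\bar x$ was chosen, so the defining inequality of $A_r$ holds with equality at $x = \bar x$ and as an inequality for all other $x$.

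I expect the main obstacle to be the boundary analysis — showing the minimizer $\bar x$ is genuinely interior — since this requires carefully combining the Neumann condition with the first-variation bound for the squared distance function, exactly as in Brendle's treatment (Lemma~4.2 in \cite{Br2}); the excerpt in fact flags that this and the two subsequent lemmas are curvature-condition-independent and quotes them directly from \cite{Br2}, so in the actual writeup it suffices to cite that argument verbatim. A minor secondary point is the regularity/cut-locus issue: one must know $\bar x$ is not in the cut locus of $q$ so that the squared distance is differentiable there, which again is handled by the standard perturbation argument.
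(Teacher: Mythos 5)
Your reconstruction is correct and is exactly Brendle's argument (Lemma 4.2 in \cite{Br2}), which the paper does not reprove but cites verbatim, noting that these three lemmas are independent of the curvature hypothesis. The only place you gloss over is the cut-locus issue when extracting the first-order condition at $\bar x$: the standard device (Calabi's trick) is to replace $q$ by a point $q_s=\exp_{\bar x}(s\,\gamma'(0))$, $s<d(\bar x,q)$, on the minimizing geodesic, so that $\tfrac12\big(d(\cdot,q_s)+d(\bar x,q)-s\big)^2$ is a smooth upper support for $\tfrac12 d(\cdot,q)^2$ at $\bar x$ and the minimization still applies to $ru+$ this support function, yielding the same identity for $(\exp_{\bar x}^{-1}q)^\top$.
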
	

\begin{lem}
	Assume that $(\bar{x},\bar{y})\in A_r$, and let $\bar{\gamma}(t):=\exp_{\bar{x}}(tD^\Sigma u(\bar{x})+t\bar{y})$ for all $t\in[0,r]$. If $Z$ is a smooth vector field along $\bar{\gamma}$ satisfying $Z(0)\in T_{\bar{x}}\Sigma$ and $Z(r)=0$, then
	$$
	\begin{aligned}
		&((D^\Sigma)^2u)(Z(0),Z(0))-\langle B(Z(0),Z(0)),\bar{y}\rangle\\
		&+\int_0^r\big(|\bar{D}_tZ(t)|^2-\bar{R}(\bar{\gamma}'(t),Z(t),\bar{\gamma}'(t),Z(t))\big)dt\geq0.
	\end{aligned}
	$$
\end{lem}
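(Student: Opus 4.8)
The statement to be proven is a second-variation / index-form inequality for the transport-geodesic $\bar\gamma$, and it is the submanifold analogue of the earlier Lemma for domains. The plan is to follow Brendle's argument in \cite{Br2}: the inequality should come from comparing the Neumann datum $u$ against a suitable competitor built from $\bar\gamma$ and the defining property of $A_r$. First I would recall that, by definition of $A_r$, the function
$$
x\longmapsto ru(x)+\tfrac12\,d\big(x,\exp_{\bar x}(rD^\Sigma u(\bar x))+r\bar y\big)^2
$$
attains a local minimum on $\Sigma$ at $x=\bar x$. The key observation is that the endpoint of $\bar\gamma$ is exactly $\Phi_r(\bar x,\bar y)$, so the squared-distance term in the displayed function, evaluated along a curve in $\Sigma$ through $\bar x$, is controlled from above by the energy of a variation of $\bar\gamma$ with fixed endpoint at $t=r$.

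The main steps, in order, are as follows. (1) Fix the vector field $Z$ along $\bar\gamma$ with $Z(0)\in T_{\bar x}\Sigma$ and $Z(r)=0$, and let $\alpha(s)$ be a curve in $\Sigma$ with $\alpha(0)=\bar x$ and $\alpha'(0)=Z(0)$; build a variation $\bar\gamma_s$ of $\bar\gamma$ with $\bar\gamma_s(0)=\alpha(s)$, $\bar\gamma_s(r)$ fixed, and variation field $Z$. (2) Use the first-order minimality at $\bar x$ together with the second-order condition: the Hessian at $\bar x$ of the displayed function, applied to $Z(0)$, is nonnegative. (3) Expand that Hessian: the $ru$ term contributes $r\,((D^\Sigma)^2u)(Z(0),Z(0))$ plus a term involving $\bar D_{Z(0)}Z(0)$ paired against $D^\Sigma u$, while the squared-distance term, via the standard second-variation-of-energy formula for the geodesic $\bar\gamma$, contributes $r\int_0^r(|\bar D_tZ|^2-\bar R(\bar\gamma',Z,\bar\gamma',Z))\,dt$ together with a boundary term at $t=0$ of the form $-r\langle \bar D_{Z(0)}Z(0),\bar\gamma'(0)\rangle$. (4) Here $\bar\gamma'(0)=D^\Sigma u(\bar x)+\bar y$, and $Z(0)\in T_{\bar x}\Sigma$; decomposing $\bar D_{Z(0)}Z(0)$ into tangential and normal parts and using $\langle B(Z(0),Z(0)),\cdot\rangle=\langle \bar D_{Z(0)}Z(0),\cdot\rangle^\perp$, the cross terms combine: the $D^\Sigma u$ piece cancels against the term coming from $ru$, and the $\bar y$ piece produces exactly $-\langle B(Z(0),Z(0)),\bar y\rangle$. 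Dividing through by $r>0$ gives the claimed inequality.

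The step I expect to be the main obstacle is (3)–(4): carefully bookkeeping the boundary contributions at $t=0$ when differentiating the squared distance twice, and checking that the second-order term in the expansion of $d(\alpha(s),\cdot)^2$ is genuinely the index form of $\bar\gamma$ plus precisely the $\bar D_{Z(0)}Z(0)$ correction — i.e. that no curvature-of-$M$ or focal-point issues intrude before $t=r$. This is exactly where one uses that $(\bar x,\bar y)\in A_r$ so that $\bar\gamma|_{[0,r]}$ is a minimizing geodesic realizing the distance, hence free of interior conjugate/focal points, making the second-variation formula valid with equality of the relevant quadratic forms. Since, as the excerpt notes, this computation is purely local and does not see the sign of the ambient curvature, it goes through verbatim as in Lemma 4.3 of \cite{Br2}; the asymptotically-nonnegative hypothesis only enters later, when $S(t)=\bar R(\bar\gamma',E_i,\bar\gamma',E_j)$ is estimated.
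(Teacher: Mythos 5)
Your proposal correctly reconstructs the argument of Brendle's Lemma 4.3 in \cite{Br2}, which is precisely what the paper cites (without reproving) for this statement: use the definition of $A_r$ to conclude that $\bar x$ minimizes $x\mapsto ru(x)+\tfrac12 d(x,\Phi_r(\bar x,\bar y))^2$, replace the squared-distance term by the energy barrier along a variation $\bar\gamma_s$ with $\bar\gamma_s(0)=\alpha(s)\subset\Sigma$ and fixed right endpoint, and apply the second-variation-of-energy formula, with the $t=0$ boundary term $-\langle\bar D_{Z(0)}Z(0),\bar\gamma'(0)\rangle$ combining with the Hessian of $ru\circ\alpha$ to cancel the $D^\Sigma u$ piece and leave $-\langle B(Z(0),Z(0)),\bar y\rangle$. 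One small remark: the conjugate/focal-point comment is dispensable --- the barrier $\tfrac12 d^2\le\tfrac r2\int_0^r|\bar\gamma_s'|^2$ with equality at $s=0$ (since $\bar\gamma$ is minimizing by the $x=\bar x$ case of the $A_r$ inequality) already makes the second-variation computation legitimate without invoking absence of conjugate points.
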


\begin{lem}\label{vani-sub}
	Assume that $(\bar{x},\bar{y})\in A_r$, and let $\bar{\gamma}(t):=\exp_{\bar{x}}(tD^\Sigma u(\bar{x})+t\bar{y})$ for all $t\in[0,r]$. Let $\{e_1,\dots,e_n\}$ be an orthonormal basis of $T_{\bar{x}}\Sigma$. Suppose that $W$ is a Jacobi field along $\bar{\gamma}$ satisfying $W(0)\in T_{\bar{x}}\Sigma$ and $\langle\bar{D}_tW(0),e_j\rangle=((D^\Sigma)^2u)(W(0),e_j)-\langle B(W(0),e_j),\bar{y}\rangle$ for each $1\leq j\leq n$. If $W(\tau)=0$ for some $\tau\in(0,r)$, then $W$ vanishes identically.
\end{lem}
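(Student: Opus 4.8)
The plan is to follow Brendle's argument for the corresponding statement in \cite{Br2}, which is the submanifold analogue of Lemma~\ref{jacovani}: one feeds a suitable broken Jacobi field into the index-type inequality of the preceding lemma, shows it attains equality, and then reads off from the vanishing of the first variation that $W$ must vanish. Denote by
\[
\mathcal{Q}(Z):=((D^\Sigma)^2u)(Z(0),Z(0))-\langle B(Z(0),Z(0)),\bar y\rangle+\int_0^r\big(|\bar D_tZ|^2-\bar R(\bar\gamma',Z,\bar\gamma',Z)\big)\,dt
\]
the quadratic functional appearing in the preceding lemma, so that $\mathcal{Q}(Z)\geq0$ for every vector field $Z$ along $\bar\gamma$ with $Z(0)\in T_{\bar x}\Sigma$ and $Z(r)=0$. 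Although that lemma is phrased for smooth $Z$, a standard smoothing argument at corners shows that the inequality persists for piecewise-smooth (Lipschitz) fields of this type, and I will use it in that form.

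Now suppose $W$ is a Jacobi field along $\bar\gamma$ with $W(0)\in T_{\bar x}\Sigma$ and $\langle\bar D_tW(0),e_j\rangle=((D^\Sigma)^2u)(W(0),e_j)-\langle B(W(0),e_j),\bar y\rangle$ for $1\leq j\leq n$, and assume $W(\tau)=0$ for some $\tau\in(0,r)$. Define $Z(t)=W(t)$ for $t\in[0,\tau]$ and $Z(t)=0$ for $t\in[\tau,r]$; this is an admissible Lipschitz field, since $Z(0)=W(0)\in T_{\bar x}\Sigma$ and $Z(r)=0$. I evaluate $\mathcal{Q}(Z)$. The integral over $[\tau,r]$ vanishes, while integration by parts on $[0,\tau]$ together with the Jacobi equation gives $\int_0^\tau(|\bar D_tW|^2-\bar R(\bar\gamma',W,\bar\gamma',W))\,dt=\langle\bar D_tW(\tau),W(\tau)\rangle-\langle\bar D_tW(0),W(0)\rangle=-\langle\bar D_tW(0),W(0)\rangle$, using $W(\tau)=0$. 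Expanding $W(0)=\sum_j a_je_j\in T_{\bar x}\Sigma$ and invoking the prescribed value of $\bar D_tW(0)$ together with the symmetry of $(D^\Sigma)^2u$ and of $B$, one obtains $\langle\bar D_tW(0),W(0)\rangle=((D^\Sigma)^2u)(W(0),W(0))-\langle B(W(0),W(0)),\bar y\rangle$, which exactly cancels the first two terms of $\mathcal{Q}(Z)$. Hence $\mathcal{Q}(Z)=0$, so $Z$ minimizes the nonnegative functional $\mathcal{Q}$ over the linear space of Lipschitz fields along $\bar\gamma$ that vanish at $r$ and are tangent to $\Sigma$ at $t=0$.

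It remains to exploit the vanishing of the first variation. For any smooth vector field $V$ with compact support in $(0,r)$, the function $s\mapsto\mathcal{Q}(Z+sV)=2s\,\mathcal{B}(Z,V)+s^2\mathcal{Q}(V)$ is nonnegative for all small $s$ of both signs, where $\mathcal{B}$ is the symmetric bilinear form polarizing $\mathcal{Q}$; hence $\mathcal{B}(Z,V)=0$. Since $V$ vanishes at $0$ and $r$, integrating by parts on $[0,\tau]$ and on $[\tau,r]$ separately and using that $W$ solves the Jacobi equation on $[0,\tau]$ while $Z\equiv0$ on $[\tau,r]$, all terms drop except the one produced at the corner $t=\tau$, so that $\mathcal{B}(Z,V)=\langle\bar D_tW(\tau),V(\tau)\rangle$. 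As $V(\tau)$ may be prescribed arbitrarily, $\bar D_tW(\tau)=0$. Combined with $W(\tau)=0$ and the uniqueness theorem for the linear second-order Jacobi equation, this forces $W\equiv0$.

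The main obstacle is regularity bookkeeping rather than any deeper geometry: one must justify applying the preceding lemma (stated for smooth fields) to the broken field $Z$ and to the perturbations $Z+sV$, and one must keep track of the signs in the two integrations by parts so that the boundary contributions at $t=0$ cancel and only the corner contribution at $t=\tau$ survives. Everything else is a direct transcription of the argument behind Lemma~\ref{jacovani}.
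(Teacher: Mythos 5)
Your proof is correct and is essentially the argument the paper relies on: the paper omits a proof and cites Brendle (Lemma 4.5 in \cite{Br2}), whose proof is exactly this broken-Jacobi-field construction — extend the index inequality of the preceding lemma to piecewise-smooth fields by smoothing, check via integration by parts and the prescribed initial condition that the functional vanishes on $Z$, and then use the corner contribution at $t=\tau$ (there phrased by exhibiting a perturbation making the functional negative, equivalently your first-variation computation) to get $\bar D_tW(\tau)=0$ and conclude by uniqueness for the Jacobi equation. No gaps; your sign bookkeeping at $t=0$ and $t=\tau$ is right.
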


Now we begin the proof of Theorem \ref{thm1.4}.

\begin{proof}[\textbf{Proof of Theorem 1.4}]
For any $r>0$ and $(\bar{x},\bar{y})\in A_r$, let $\{e_i\}_{1\leq i\leq n}$ be any given orthonormal basis in $T_{\bar{x}}\Sigma$. Choose a normal coordinate system $(x^1,\cdots,x^n)$ on $\Sigma$ around $\bar{x}$ such that $\frac{\partial}{\partial x^i}=e_i$ at $\bar{x}\ (1\leq i\leq n)$. Let $\{e_\alpha\}_{n+1\leq \alpha\leq n+p}$ be an orthonormal frame field of $T^\perp\Sigma$ around $\bar{x}$ such that $\big((D^\Sigma)^\perp e_\alpha\big)_{\bar{x}}=0$ for $n+1\leq\alpha\leq n+p$, where $(D^\Sigma)^\perp$ denotes the normal connection in the normal bundle $T^\perp\Sigma$ of $\Sigma$. Any normal vector $y$ around $\bar{x}$ can be written as $y=\sum_{\alpha=n+1}^{n+p}y^\alpha e_\alpha$, and thus $(x^1,\cdots,x^n,y^{n+1},\cdots,y^{n+p})$ becomes a local coordinate system on the total space of the normal bundle $T^\perp\Sigma$.

Let $\bar{\gamma}(t):=\exp_{\bar{x}}(tD^\Sigma u(\bar{x})+t\bar{y})$ for all $t\in[0,r]$. For each $1\leq A\leq n+p$, we denote by $E_A(t)$ the parallel transport of $e_A(\bar{x})$ along $\bar{\gamma}$. For each $1\leq i\leq n$, let $X_i$ be the Jacobi field along $\bar{\gamma}$ with the following initial conditions
\begin{equation}\label{ja1}
	\begin{aligned}
		X_i(0)&=e_i,\\
		\langle\bar{D}_tX_i(0),e_j\rangle&=((D^\Sigma)^2u)(e_i,e_j)-\langle B(e_i,e_j),\bar{y}
		\rangle,\quad 1\leq j\leq n,\\
		\langle\bar{D}_tX_i(0),e_\beta\rangle&=
		\langle B(e_i,D^\Sigma u(\bar{x})),e_\beta
		\rangle,\quad n+1\leq \beta\leq n+p.
	\end{aligned}
\end{equation}
For each $n+1\leq\alpha\leq n+p$, let $X_\alpha$ be the Jacobi field along $\bar{\gamma}$ satisfying
\begin{equation}\label{ja2}
	X_\alpha(0)=0,\quad
	\bar{D}_tX_\alpha(0)=e_\alpha.
\end{equation}
Using Lemma $\ref{vani-sub}$, we known that $\{X_A(t)\}_{1\leq A\leq n+p}$ are linearly independent for each $t\in(0,r)$.

Let $P(t)=(P_{AB}(t))$ and $S(t)=(S_{AB}(t))$ be the matrices given by
$$
\begin{aligned}
	P_{AB}(t)&=\langle X_A(t),E_B(t)\rangle,\\
	S_{AB}(t)&=\bar{R}(\bar{\gamma}'(t),E_A(t),\bar{\gamma}'(t),E_B(t))
\end{aligned}
$$
for $1\leq A,B\leq n+p$ and $t\in[0,r]$, where $\bar{R}$ denotes the Riemannian curvature tensor of $M$.  Using the Jacobi equation and the initial conditions \eqref{ja1}, \eqref{ja2}, we have
\begin{equation}\label{jacobi-sub}
\begin{aligned}
	P''(t)&=-P(t)S(t),\\
	P_{AB}(0)&=\begin{bmatrix}
		\delta_{ij}&0\\
		0&0
	\end{bmatrix},
	\\
	P'_{AB}(0)&=\begin{bmatrix}
		((D^\Sigma)^2u)(e_i,e_j)-\langle B(e_i,e_j),\bar{y}
		\rangle&\langle B(e_i,D^\Sigma u(\bar{x})),e_\beta
		\rangle\\
		0&\delta_{\alpha\beta}
	\end{bmatrix}.
\end{aligned}
\end{equation}
Set $Q(t)=P(t)^{-1}P'(t),t\in(0,r)$. By \eqref{jacobi-sub}, a simple computation  yields
\begin{equation}\label{riccati-sub}
	\frac{d}{dt}Q(t)=-S(t)-Q^2(t),
\end{equation}
where $Q(t)$ is symmetric. For the matrices $P(t),Q(t)$, it is easy to derive their following asymptotic expansions (cf. \cite{Br2})
\begin{equation}\label{Q-asy}
\begin{aligned}
P(t)&=\begin{bmatrix}
	\delta_{ij}+O(t)&O(t)\\
	O(t) & t\delta_{\alpha\beta}+O(t^2)
\end{bmatrix},\\
Q(t)&=\begin{bmatrix}
	(D^\Sigma)^2u(e_i,e_j)-\langle B(e_i,e_j),\bar{y}
	\rangle+O(t)&O(1)
	\\
	O(1)&\frac{1}{t}\delta_{\alpha\beta}+O(1)
\end{bmatrix}
\end{aligned}
\end{equation}
as $t\to0^+$. In terms of \eqref{riccati-sub} and the curvature assumption for $M$, we deduce
\begin{equation}\label{tr1-sub}
\begin{aligned}
	&\frac{d}{dt}Q_{AA}(t)+Q_{AA}(t)^2\leq\frac{d}{dt}Q_{AA}(t)+\sum_{B=1}^{n+p}Q_{AB}Q_{BA}(t)
	\\
	&=-S_{AA}(t)
	\\
	&\leq(|D^\Sigma u(\bar{x})|^2+|\bar{y}|^2-\langle D^\Sigma u(\bar{x})+\bar{y},e_A\rangle^2)\lambda(d(o,\bar{\gamma}(t)))\\
	&\leq (|D^\Sigma u(\bar{x})|^2+|\bar{y}|^2-\langle D^\Sigma u(\bar{x})+\bar{y},e_A\rangle^2)\lambda(\big|d(o,\bar{x})-t|D^\Sigma u(\bar{x})+\bar{y}|\big|)
\end{aligned}
\end{equation}
for $1\leq A\leq n+p$, where the last inequality follows from the following triangle inequality
$$
	d(o,\bar{\gamma}(t))\geq\big|d(o,\bar{x})-d(\bar{x},\bar{\gamma}(t))\big|
	=\big|d(o,\bar{x})-t|D^\Sigma u(\bar{x})+\bar{y}|\big|.
$$
For $1\leq A\leq n+p$, we set 
$$
	\Lambda_{\bar{x},A}(t)=(|D^\Sigma u(\bar{x})|^2+|\bar{y}|^2-\langle D^\Sigma u(\bar{x})+\bar{y},e_A\rangle^2)\lambda(\big|d(o,\bar{x})-t|D^\Sigma u(\bar{x})+\bar{y}|\big|).
$$
Then we have
\begin{equation*}
\left\{
\begin{aligned}
	&Q'_{ii}(t)+Q_{ii}(t)^2\leq\Lambda_{\bar{x},i}(t),\quad t\in(0,r),
	\\
	&\lim_{t\to0^+}Q_{ii}(t)=\lambda_i,
\end{aligned}
\right.
\end{equation*}
where $\lambda_i=P'_{ii}(0)$.
Let $\phi_i$ be defined by
$$
\phi_i(t)=e^{\int_0^t Q_{ii}(\tau)d\tau}.
$$
Then $\phi_i$ satisfies 
\begin{equation}\label{Qi}
\left\{
\begin{aligned}
	&\phi_i''\leq \Lambda_{\bar{x},i}\phi_i, \quad t\in(0,r),
	\\
	&\phi_i(0)=1,\phi_i'(0)=\lambda_i.
\end{aligned} \right.
\end{equation}
Next, we denote by $\psi_{1,i},\psi_{2,i}$ solutions to the following problems
\begin{equation}\label{psi12}
\left\{
\begin{aligned}
	&\psi_{1,i}''= \Lambda_{\bar{x},i}\psi_{1,i} ,\quad t\in(0,r),
	\\
	&\psi_{1,i}(0)=0,\psi_{1,i}'(0)=1,
\end{aligned} \right.
\quad
\left\{
\begin{aligned}
	&\psi_{2,i}''= \Lambda_{\bar{x},i}\psi_{2,i} ,\quad t\in(0,r),
	\\
	&\psi_{2,i}(0)=1,\psi_{2,i}'(0)=0.
\end{aligned} \right.
\end{equation}
Similar to the proof of \eqref{h123}, \eqref{d1} and \eqref{comall}, we obtain
\begin{equation}\label{tan1}
\begin{aligned}
	\frac{\psi_{2,i}}{\psi_{1,i}}(r)&\leq \int_0^{+\infty}\Lambda_{\bar{x},i}(t)\ dt+\frac{1}{r}\\
	&\leq 2b_1{\frac{|D^\Sigma u(\bar{x})|^2+|\bar{y}|^2-\langle D^\Sigma u(\bar{x})+\bar{y},e_i\rangle^2}{\sqrt{|D^\Sigma u(\bar{x})|^2+\bar{y}^2}}}+\frac{1}{r}\\
	&\leq 2b_1{{\sqrt{|D^\Sigma u(\bar{x})|^2+\bar{y}^2}}}+\frac{1}{r}
\end{aligned}
\end{equation}
and 
\begin{equation}\label{tan2}
\psi_{1,i}(t)\leq te^{\frac{|D^\Sigma u(\bar{x})|^2+\bar{y}^2-\langle D^\Sigma u(\bar{x})+\bar{y},e_i\rangle^2}{|D^\Sigma u(\bar{x})|^2+\bar{y}^2}(2r_0b_1+b_0)},
\quad t\in(0,r),
\end{equation}
where $r_0=\max\{d(o,x)|x\in\Sigma\}$. Using Lemma \ref{comparison}, one can find from \eqref{Qi} and \eqref{psi12} that
\begin{equation}\label{tans}
Q_{ii}(t)=\frac{\phi'_i}{\phi_i}(t)\leq\frac{\psi'_{2,i}+\lambda_i\psi'_{1,i}}{\psi_{2,i}+\lambda_i\psi_{1,i}}(t).
\end{equation}
Similarly we obtain from \eqref{Q-asy} and \eqref{tr1-sub} that
$$
\left\{
\begin{aligned}
	&Q'_{\alpha\alpha}(t)+Q_{\alpha\alpha}(t)^2\leq\Lambda_{\bar{x},\alpha}(t),\quad t\in(0,r),\\
	&Q_{\alpha\alpha}(t)=\frac{1}{t}+O(1),\quad \text{as }t\to0^+
\end{aligned}\right.
$$
for $n+1\leq\alpha\leq n+p$. Set $\phi_\alpha(t)=te^{\int_0^t(Q_{\alpha\alpha}
	(\tau)-\frac{1}{\tau})d\tau}$. Then $\phi_\alpha$ satisfies
$$
\left\{
\begin{aligned}
	&\phi_\alpha''\leq \Lambda_{\bar{x},\alpha}\phi_\alpha, \quad t\in(0,r),\\
	&\phi_\alpha(0)=0,\phi_\alpha'(0)=1.
\end{aligned} \right.
$$
Next, we denote by $\psi_{1,\alpha}$ the unique solution to the following problem
\begin{equation}\label{psi1}
\left\{
\begin{aligned}
	&\psi_{1,\alpha}''= \Lambda_{\bar{x},\alpha}\psi_{1,\alpha} ,
	\quad t\in(0,r),\\
	&\psi_{1,\alpha}(0)=0,\psi_{1,\alpha}'(0)=1.
\end{aligned} \right.
\end{equation}
Similar to \eqref{comall}, we derive that
\begin{equation}\label{ver2}
\psi_{1,\alpha}\leq e^{\frac{|D^\Sigma u(\bar{x})|^2+\bar{y}^2-\langle D^\Sigma u(\bar{x})+\bar{y},e_\alpha\rangle^2}{|D^\Sigma u(\bar{x})|^2+\bar{y}^2}(2r_0b_1+b_0)}t,
\end{equation}
for $t\in(0,r)$. By Lemma 2.1 in \cite{PRS} we have
\begin{equation}\label{vers}
Q_{\alpha\alpha}(t)=\frac{\phi'_\alpha}{\phi_\alpha}(t)\leq 
\frac{\psi'_{1,\alpha}}{\psi_{1,\alpha}}(t).
\end{equation}
From \eqref{tans} and \eqref{vers}, it follows that
\begin{equation}\label{logdet}
	\frac{d}{dt}\log\det P(t)=\mathrm{tr}(Q(t))\leq
	\sum_i \frac{\psi'_{2,i}+\lambda_i\psi'_{1,i}}{\psi_{2,i}+\lambda_i\psi_{1,i}}(t)+\sum_\alpha \frac{\psi'_{1,\alpha}}{\psi_{1,\alpha}}(t).
\end{equation}
Combining \eqref{psi12}, \eqref{psi1} with the asymptotic properties in \eqref{Q-asy}, we conclude that
\begin{equation}\label{Ppsi-asy}
	\lim_{t\to0^+}\frac{\det P(t)}{\prod_i(\psi_{2,i}(t)+\lambda_i\psi_{1,i}(t))
		\prod_\alpha\psi_{1,\alpha}(t)}=1.
\end{equation}
Integrating \eqref{logdet} over $[\varepsilon,t]$ for $0<\varepsilon<t$ and using \eqref{Ppsi-asy} by letting $\varepsilon\to0^+$, it is easy to show that
$$
|\det \bar{D}\Phi_t(\bar{x},\bar{y})|=\det P(t)\leq \prod_i(\psi_{2,i}(t)+\lambda_i\psi_{1,i}(t))
\prod_\alpha\psi_{1,\alpha}(t).
$$
Note that $0\leq\phi_i\leq(\psi_{2,i}+\lambda_i\psi_{1,i})$ and $\psi_{1,i}\geq0\ (1\leq i\leq n)$. Combining \eqref{tan2}, \eqref{ver2} with arithmetric-geometric mean inequality, we obtain
$$
\begin{aligned}
	|\det \bar{D}\Phi_t(\bar{x},\bar{y})|
	&\leq\Big(\frac{1}{n}\sum_i\frac{\psi_{2,i}(t)}{\psi_{1,i}(t)}+\frac{1}{n}
	(\Delta_\Sigma u(\bar{x})-\langle H(\bar{x}),\bar{y}\rangle)\Big)^n\prod_A\psi_{1,A}(t)\\
	&\leq\Big(\frac{1}{n}\sum_i\frac{\psi_{2,i}(t)}{\psi_{1,i}(t)}+
	\frac{1}{n}(\Delta_\Sigma u(\bar{x})-\langle H(\bar{x}),\bar{y}\rangle)\Big)^nt^{n+p}e^{(n+p-1)(2r_0b_1+b_0)}
\end{aligned}
$$
which yields by \eqref{tan1} that
\begin{equation}\label{logsobo}
	\begin{aligned}
		&|\det \bar{D}\Phi_r(\bar{x},\bar{y})|\\
		&\leq
		(2b_1{{\sqrt{|Du(\bar{x})|^2+\bar{y}^2}}}+\frac{1}{r}+\frac{1}{n}
		(\Delta_\Sigma u(\bar{x})-\langle H(\bar{x}),\bar{y}\rangle))^nr^{n+p}e^{(n+p-1)(2r_0b_1+b_0)}
	\end{aligned}
\end{equation}
for all $(\bar{x},\bar{y})\in A_r$. Noting that ${\sqrt{|Du(\bar{x})|^2+\bar{y}^2}}<1$, we derive by Lemma \ref{hess-sub} and \eqref{logsobo} that
\begin{equation}\label{sub1}
	|\det \bar{D}\Phi_r(\bar{x},\bar{y})|
	\leq(\frac{1}{r}+f^\frac{1}{n-1}(\bar{x}))^n
	r^{n+p}e^{(n+p-1)(2r_0b_1+b_0)}
\end{equation}
for all $(\bar{x},\bar{y})\in A_r$.
Using Lemma \ref{tran-sub} and \eqref{sub1}, one may find in a similar way as the proof of Theorem 1.4 in \cite{Br2} that
\begin{equation}\label{br2-f}
\begin{aligned}
	&|\{p\in M:\sigma r<d(x,p)<r,\forall x\in\Sigma\}|\\
	&\leq\frac{p}{2}|B^p|(1-\sigma^2)e^{(n+p-1)(2r_0b_1+b_0)}\int_\Sigma
	(\frac{1}{r}+f^\frac{1}{n-1}(\bar{x}))^nr^{n+p},
\end{aligned}
\end{equation}
for all $r>0$ and all $0\leq\sigma<1$. 
Similar to the proof of \eqref{avr-f-d}, one can obtain by using Lemma \ref{poly-degree-1} that
\begin{equation}\label{avr-f-s}
\begin{aligned}
	&\lim_{r\to+\infty}\frac{|\{p\in M:\sigma r<d(x,p)<r,\forall x\in\Sigma\}|}{(n+p)\int_0^rh^{n+p-1}dt}\\
	&=|B^{n+p}|\theta\lim_{r\to+\infty}
	(1-\sigma\frac{h^{n+p-1}(\sigma r)}{h^{n+p-1}(r)})\\
	&=|B^{n+p}|(1-\sigma^{n+p})\theta.
\end{aligned}
\end{equation}
Dividing \eqref{br2-f} by $(n+p)\int_0^rh(t)^{n+p-1}dt$ and sending $r\to+\infty$, we deduce by using \eqref{h1234} and \eqref{avr-f-s} that
\begin{equation}\label{ddl1-f}
\begin{aligned}
	&=|B^{n+p}|(1-\sigma^{n+p})\theta\\
	&\leq
	\frac{p}{2}|B^p|(1-\sigma^2)e^{(n+p-1)(2r_0b_1+b_0)}\int_\Sigma f
	^{\frac{n}{n-1}}
	\lim_{r\to+\infty}\frac{r^{n+p}}{(n+p)\int_0^rh(t)^{n+p-1}dt}
	\\
	&\leq\frac{p}{2}|B^p|(1-\sigma^2)\Big(\frac{e^{2r_0b_1+b_0}}{1+b_0}\Big)^{n+p-1}
	\int_\Sigma f
	^{\frac{n}{n-1}}.
\end{aligned}
\end{equation}
for all $0\leq\sigma<1$. Now, if we divide \eqref{ddl1-f} by $1-\sigma$ and let $\sigma\to1$, we have
\begin{equation}\label{dll-ff}
	(n+p)|B^{n+p}|\theta\leq p|B^p|\Big(\frac{e^{2r_0b_1+b_0}}{1+b_0}\Big)^{n+p-1}
	\int_\Sigma f
	^{\frac{n}{n-1}}.
\end{equation}
Hence \eqref{scale2} and \eqref{dll-ff} imply that
$$
\begin{aligned}
	&\int_{\partial\Sigma}f+\int_\Sigma\sqrt{|D^\Sigma f|^2+f^2|H|^2}
	+2nb_1\int_\Sigma f\\
	&\geq
	n\Big(\frac{(n+p)|B^{n+p}|}{p|B^p|}\Big)^{\frac{1}{n}}\theta^{\frac{1}{n}}
	\Big(\frac{1+b_0}{e^{2r_0b_1+b_0}}\Big)^{\frac{n+p-1}{n}}
	\Big(\int_\Sigma f^{\frac{n}{n-1}}\Big) ^{\frac{n-1}{n}}.
\end{aligned}
$$
\end{proof}

\begin{proof}[\textbf{Proof of Theorem 1.6}]
Suppose the equality of Theorem \ref{thm1.4} holds. Then we have equality in both \eqref{h1234} and \eqref{tan1} and either one forces $\lambda\equiv0$. Thus $M$ has nonnegative sectional curvature. The assertion follows immediately from Theorem 1.6 in \cite{Br2}.	
\end{proof}

Finally we would like to mention that we have established a Sobolev type inequality for manifolds with density and asymptotically nonnegative Bakery-\'Emery Ricci curvature in \cite{DLL2} and a logarithmic Sobolev type inequality for closed submanifolds in manifolds with asymptotically nonnegative sectional curvature in \cite{DLL3}.

\section*{Acknowledgements}
We would like to thank Prof. J. Z. Zhou for drawing our attention to Brendle's paper. Thanks also due to Prof. M. Fogagnolo for his useful communication.

\bibliographystyle{plain}
\bibliography{DLLsobo}

\noindent\mbox{Yuxin Dong and Lingen Lu} \\
\mbox{School of Mathematical Sciences}\\
\mbox{220 Handan Road, Yangpu District}\\
\mbox{Fudan University}\\
\mbox{Shanghai, 20043}\\
\mbox{P.R. China}\\
\mbox{\textcolor{blue}{yxdong@fudan.edu.cn}}\\
\mbox{\textcolor{blue}{19110180021@fudan.edu.cn}}

\newpage

\noindent\mbox{Hezi Lin}\\
\mbox{School of Mathematics and Statistics \&  FJKLMAA}\\
\mbox{Fujian Normal University}\\
\mbox{Fuzhou,  350108}\\
\mbox{P.R. China}\\
\mbox{\textcolor{blue}{ lhz1@fjnu.edu.cn}}

\end{document}